\documentclass[11pt,a4paper]{article}
\usepackage{mathrsfs}
\usepackage{indentfirst}
\setlength{\parskip}{3\lineskip}
\usepackage{amsmath,amssymb,amsfonts,amsthm,graphics}
\usepackage{makeidx}
\usepackage{color}

\voffset -20mm \rm
\hoffset -8mm \rm
\setlength{\textwidth}{150mm} \setlength{\textheight}{235mm}

\newtheorem{theorem}{Theorem}

\newtheorem{lemma}{Lemma}

\newtheorem{observation}{Observation}

\begin{document}
\title{\Large\bf The minimal size of a graph with given generalized
$3$-edge-connectivity\footnote{Supported by NSFC No.11071130}}
\author{\small Xueliang Li, Yaping Mao
\\
\small Center for Combinatorics and LPMC-TJKLC
\\
\small Nankai University, Tianjin 300071, China
\\
\small lxl@nankai.edu.cn; maoyaping@ymail.com.}
\date{}
\maketitle
\begin{abstract}
For $S\subseteq V(G)$ and $|S|\geq 2$, $\lambda(S)$ is the maximum
number of edge-disjoint trees connecting $S$ in $G$. For an integer
$k$ with $2\leq k\leq n$, the \emph{generalized
$k$-edge-connectivity} $\lambda_k(G)$ of $G$ is then defined as
$\lambda_k(G)= min\{\lambda(S) : S\subseteq V(G) \ and \ |S|=k\}$.
It is also clear that when $|S|=2$, $\lambda_2(G)$ is nothing new
but the standard edge-connectivity $\lambda(G)$ of $G$. In this
paper, graphs of order $n$ such that $\lambda_3(G)=n-3$ is
characterized. Furthermore, we determine the minimal number of edges
of a graph of order $n$ with $\lambda_3=1,n-3,n-2$ and give a sharp
lower bound for $2\leq \lambda_3\leq n-4$.

{\flushleft\bf Keywords}: edge-connectivity, Steiner tree,
edge-disjoint trees, generalized edge-connectivity.\\[2mm]
{\bf AMS subject classification 2010:} 05C40, 05C05, 05C75.
\end{abstract}

\section{Introduction}

All graphs considered in this paper are undirected, finite and
simple. We refer to the book \cite{bondy} for graph theoretical
notation and terminology not described here. For a graph $G$, let
$V(G)$ and $E(G)$ denote the set of vertices and the set of edges of
$G$, respectively. As usual, the \emph{union} of two graphs $G$ and
$H$ is the graph, denoted by $G\cup H$, with vertex set $V(G)\cup
V(H)$ and edge set $E(G)\cup E(H)$. Let $mH$ be the disjoint union
of $m$ copies of a graph $H$. We denote by $E_G[X,Y]$ the set of
edges of $G$ with one end in $X$ and the other end in $Y$. If
$X=\{x\}$, we simply write $E_G[x,Y]$ for $E_G[\{x\},Y]$.

The generalized connectivity of a graph $G$, introduced by Chartrand
et al. in \cite{Chartrand1}, is a natural and nice generalization of
the concept of (vertex-)connectivity. For a graph $G=(V,E)$ and a
set $S\subseteq V(G)$ of at least two vertices, \emph{an $S$-Steiner
tree} or \emph{a Steiner tree connecting $S$} (or simply, \emph{an
$S$-tree}) is a such subgraph $T=(V',E')$ of $G$ that is a tree with
$S\subseteq V'$. Two Steiner trees $T$ and $T'$ connecting $S$ are
said to be \emph{internally disjoint} if $E(T)\cap
E(T')=\varnothing$ and $V(T)\cap V(T')=S$. For $S\subseteq V(G)$ and
$|S|\geq 2$, the \emph{generalized local connectivity} $\kappa(S)$
is the maximum number of internally disjoint trees connecting $S$ in
$G$. Note that when $|S|=2$ a Steiner tree connecting $S$ is just a
path connecting the two vertices of $S$. For an integer $k$ with
$2\leq k\leq n$, the \emph{generalized $k$-connectivity}
$\kappa_k(G)$ of $G$ is defined as $\kappa_k(G)= min\{\kappa(S) :
S\subseteq V(G) \ and \ |S|=k\}$. Clearly, when $|S|=2$,
$\kappa_2(G)$ is nothing new but the connectivity $\kappa(G)$ of
$G$, that is, $\kappa_2(G)=\kappa(G)$, which is the reason why one
addresses $\kappa_k(G)$ as the generalized connectivity of $G$. By
convention, for a connected graph $G$ with less than $k$ vertices,
we set $\kappa_k(G)=1$. Set $\kappa_k(G)=0$ when $G$ is
disconnected. Results on the generalized connectivity can be found
in \cite{Chartrand2, LLSun, LLL1, LLL2, LL, LLZ, Okamoto}.

As a natural counterpart of the generalized connectivity, we
introduced the concept of generalized edge-connectivity in
\cite{LMS}. For $S\subseteq V(G)$ and $|S|\geq 2$, the
\emph{generalized local edge-connectivity} $\lambda(S)$ is the
maximum number of edge-disjoint trees connecting $S$ in $G$. For an
integer $k$ with $2\leq k\leq n$, the \emph{generalized
$k$-edge-connectivity} $\lambda_k(G)$ of $G$ is then defined as
$\lambda_k(G)= min\{\lambda(S) : S\subseteq V(G) \ and \ |S|=k\}$.
It is also clear that when $|S|=2$, $\lambda_2(G)$ is nothing new
but the standard edge-connectivity $\lambda(G)$ of $G$, that is,
$\lambda_2(G)=\lambda(G)$, which is the reason why we address
$\lambda_k(G)$ as the generalized edge-connectivity of $G$. Also set
$\lambda_k(G)=0$ when $G$ is disconnected.

In addition to being natural combinatorial measures, the generalized
connectivity and generalized edge-connectivity can be motivated by
their interesting interpretation in practice. For example, suppose
that $G$ represents a network. If one considers to connect a pair of
vertices of $G$, then a path is used to connect them. However, if
one wants to connect a set $S$ of vertices of $G$ with $|S|\geq 3$,
then a tree has to be used to connect them. This kind of tree with
minimum order for connecting a set of vertices is usually called a
Steiner tree, and popularly used in the physical design of VLSI, see
\cite{Sherwani}. Usually, one wants to consider how tough a network
can be, for connecting a set of vertices. Then, the number of
totally independent ways to connect them is a measure for this
purpose. The generalized $k$-connectivity and generalized
$k$-edge-connectivity can serve for measuring the capability of a
network $G$ to connect any $k$ vertices in $G$.

The following two observations are easily seen.
\begin{observation}\label{obs1}
If $G$ is a connected graph, then $\kappa_k(G)\leq \lambda_k(G)\leq
\delta(G)$.
\end{observation}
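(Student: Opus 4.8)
The plan is to verify the two inequalities $\kappa_k(G)\le\lambda_k(G)$ and $\lambda_k(G)\le\delta(G)$ separately, in each case by arguing at the level of a single vertex set $S$ and then taking the minimum over all $k$-subsets of $V(G)$.

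For the first inequality, fix any $S\subseteq V(G)$ with $|S|=k$ and let $\mathcal{T}$ be a maximum family of internally disjoint $S$-trees, so that $|\mathcal{T}|=\kappa(S)$. By the very definition of internal disjointness, any two members $T,T'\in\mathcal{T}$ satisfy $E(T)\cap E(T')=\varnothing$; hence $\mathcal{T}$ is in particular a family of edge-disjoint $S$-trees, which gives $\kappa(S)\le\lambda(S)$. Taking the minimum over all $k$-subsets $S$ yields $\kappa_k(G)\le\lambda_k(G)$ whenever $|V(G)|\ge k$, and the degenerate case $|V(G)|<k$ is covered by the conventions recorded above.

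For the second inequality, let $v$ be a vertex with $\deg_G(v)=\delta(G)$ and let $S$ be any $k$-subset of $V(G)$ containing $v$ (such an $S$ exists as soon as $|V(G)|\ge k$, the remaining case again being handled by convention). If $T$ is any $S$-tree, then $v\in V(T)$ and $|V(T)|\ge|S|=k\ge 2$, so $T$ is a nontrivial tree and $v$ has degree at least $1$ in $T$; thus $T$ uses at least one edge of $G$ incident with $v$. Since the trees in a family of edge-disjoint $S$-trees pairwise share no edge, they use pairwise distinct edges at $v$, so such a family has at most $\deg_G(v)=\delta(G)$ members. Hence $\lambda(S)\le\delta(G)$, and therefore $\lambda_k(G)\le\lambda(S)\le\delta(G)$.

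Since both steps reduce to one-line observations, there is no genuine obstacle here; the only points needing a moment's care are the boundary situations in which $G$ has fewer than $k$ vertices (resolved by the stated conventions) and the small verification that every $S$-tree, being a tree on at least two vertices, is forced to contribute an edge incident with the chosen minimum-degree vertex $v$.
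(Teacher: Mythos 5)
Your proof is correct, and it is precisely the argument the authors have in mind when they say the observation is "easily seen" (the paper gives no written proof): internal disjointness includes edge-disjointness by definition, so $\kappa(S)\le\lambda(S)$ for every $S$, and any family of edge-disjoint $S$-trees with $v\in S$ must use distinct edges at $v$, giving $\lambda_k(G)\le\delta(G)$. Nothing further is needed.
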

\begin{observation}\label{obs2}
If $H$ is a spanning subgraph of $G$, then $\kappa_k(H)\leq
\kappa_k(G)$ and $\lambda_k(H)\leq \lambda_k(G)$.
\end{observation}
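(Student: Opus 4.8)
The plan is to prove both inequalities by the same elementary monotonicity argument: any tree that lives inside $H$ automatically lives inside $G$, and the disjointness conditions defining $\lambda(S)$ and $\kappa(S)$ are inherited upward. Since $H$ is a spanning subgraph, $V(H)=V(G)$, so for every $S\subseteq V(G)$ with $|S|=k$ one may speak of $S$-trees in both graphs and directly compare $\lambda_H(S)$ with $\lambda_G(S)$ (resp. $\kappa_H(S)$ with $\kappa_G(S)$).

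For the edge-connectivity statement, fix such an $S$ and let $\{T_1,\dots,T_\ell\}$ be a maximum family of pairwise edge-disjoint $S$-trees in $H$, so $\ell=\lambda_H(S)$. Because $E(H)\subseteq E(G)$, each $T_i$ is also a subtree of $G$ containing $S$, and the $T_i$ stay pairwise edge-disjoint; hence $\lambda_G(S)\geq\ell=\lambda_H(S)$. Taking the minimum over all $k$-subsets $S$, and using that a pointwise inequality of functions is preserved under taking minima (if $\lambda_H(S)\le\lambda_G(S)$ for every $S$, then in particular $\lambda_k(H)\le\lambda_H(S^*)\le\lambda_G(S^*)=\lambda_k(G)$ for an $S^*$ attaining the right-hand minimum), we obtain $\lambda_k(H)\leq\lambda_k(G)$. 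The argument for $\kappa_k$ is word-for-word the same: a maximum family of internally disjoint $S$-trees in $H$ consists of subtrees of $G$ that still satisfy $E(T_i)\cap E(T_j)=\varnothing$ and $V(T_i)\cap V(T_j)=S$, so $\kappa_H(S)\leq\kappa_G(S)$ for every $S$ and therefore $\kappa_k(H)\leq\kappa_k(G)$.

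There is no genuine mathematical obstacle here; the only thing to watch is that the boundary conventions do not break the inequalities. If $H$ is disconnected then $\lambda_k(H)=\kappa_k(H)=0$, which is at most any admissible value for $G$; and if $G$ — hence also its spanning subgraph $H$ — has fewer than $k$ vertices, the convention forces $\kappa_k(H)=\kappa_k(G)=1$, again consistent with the claim. So the stated inequalities hold in all cases, and the observation follows.
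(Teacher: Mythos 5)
Your argument is correct and is exactly the standard monotonicity reasoning the paper has in mind: the paper states this as an observation that is ``easily seen'' and gives no proof, and your verification (every family of pairwise edge-disjoint, resp.\ internally disjoint, $S$-trees in $H$ remains such a family in $G$, so the local parameters and hence their minima over $k$-subsets are monotone) is the intended one.
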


In \cite{LMS}, we obtained some results on the generalized
edge-connectivity. The following results are restated, which will be
used later.

\begin{lemma}\cite{LMS}\label{lem1}
For every two integers $n$ and $k$ with $2\leq k\leq n$,
$\lambda_k(K_n)=n-\lceil k/2\rceil.$
\end{lemma}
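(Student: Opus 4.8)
The plan is to establish the two matching bounds $\lambda_k(K_n)\ge n-\lceil k/2\rceil$ and $\lambda_k(K_n)\le n-\lceil k/2\rceil$. Since every two $k$-subsets of $V(K_n)$ are equivalent under an automorphism of $K_n$, the local quantity $\lambda(S)$ does not depend on the choice of $S$ with $|S|=k$, so $\lambda_k(K_n)=\lambda(S)$ for any fixed such $S$. Thus I would fix $S=\{v_1,\dots,v_k\}$ and write $V(K_n)\setminus S=\{u_1,\dots,u_{n-k}\}$, and compute $\lambda(S)$.

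For the lower bound I would exhibit $n-\lceil k/2\rceil$ edge-disjoint $S$-trees explicitly. For each $j$ with $1\le j\le n-k$, let $T_j$ be the star with centre $u_j$ and leaves $v_1,\dots,v_k$; each $T_j$ is an $S$-tree, and the $T_j$ are pairwise edge-disjoint because they use only edges joining $S$ to distinct vertices $u_j$. This gives $n-k$ trees. For the remaining edges, observe that $K_n[S]\cong K_k$ decomposes into $\lfloor k/2\rfloor$ edge-disjoint spanning trees: for $k$ even this is a decomposition of $K_k$ into $k/2$ Hamilton paths, and for $k$ odd one takes the $(k-1)/2$ Hamilton cycles of a Hamilton decomposition of $K_k$ and deletes one edge from each. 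These $\lfloor k/2\rfloor$ spanning trees use only edges inside $S$, hence are edge-disjoint from all the $T_j$. Altogether this produces $(n-k)+\lfloor k/2\rfloor=n-\lceil k/2\rceil$ edge-disjoint $S$-trees, so $\lambda(S)\ge n-\lceil k/2\rceil$.

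For the upper bound, let $T_1,\dots,T_\lambda$ be any family of edge-disjoint $S$-trees. For each $i$ let $p_i$ be the number of edges of $T_i$ with both ends in $S$, let $q_i$ be the number with exactly one end in $S$, and put $b_i=|V(T_i)\setminus S|$. Two elementary counting bounds hold: $\sum_{i=1}^{\lambda}p_i\le\binom{k}{2}$ and $\sum_{i=1}^{\lambda}q_i\le k(n-k)$, since the edges counted are distinct and lie, respectively, in $K_n[S]$ and in $E_{K_n}[S,V(K_n)\setminus S]$. The key structural fact is that $p_i+q_i\ge k$ whenever $b_i\ge 1$: the forest $T_i[S]$ has exactly $k-p_i$ components, each a maximal connected piece of $T_i$ inside $S$, and since two distinct such components have no $T_i$-edge between them, every edge of $T_i$ leaving a component is a crossing edge; as $T_i$ is connected and contains a vertex outside $S$, each of the $k-p_i$ components is incident with at least one crossing edge, so $q_i\ge k-p_i$. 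If $b_i=0$ then $T_i$ is a spanning tree of $K_n[S]$, hence $p_i+q_i=k-1$, and the number of such indices is at most $\lfloor k/2\rfloor$ because $\ell$ edge-disjoint spanning trees of $K_k$ use $\ell(k-1)\le\binom{k}{2}$ edges. Summing, $k\lambda-\lfloor k/2\rfloor\le\sum_{i=1}^{\lambda}(p_i+q_i)\le\binom{k}{2}+k(n-k)$; dividing by $k$, using $\binom{k}{2}=k(k-1)/2$, and rounding down (treating $k$ even and $k$ odd separately) gives $\lambda\le n-\lceil k/2\rceil$.

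I expect the upper bound to be the more delicate half, and within it the inequality $p_i+q_i\ge k$ for a tree using a vertex outside $S$. It is tempting to first replace each $T_i$ by a minimal $S$-Steiner tree, but in a packing one cannot freely reroute through $K_n$, so it is cleaner to argue the inequality directly from connectivity as above. The other point needing care is the bookkeeping for the degenerate trees contained entirely in $S$: it is exactly the bound $\lfloor k/2\rfloor$ on how many of those can occur, together with the final integrality step in the odd case, that makes the ceiling $\lceil k/2\rceil$ appear rather than a weaker constant.
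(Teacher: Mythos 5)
Your proof is correct. Note that the paper itself gives no proof of this lemma---it is imported from \cite{LMS}---but your argument is essentially the standard one and matches the machinery the paper does use: your inequality $p_i+q_i\ge k$ for a tree meeting $\bar S$, versus $p_i+q_i=k-1$ for a tree inside $S$, is exactly the content of the paper's Lemma~\ref{lem5} (the $\mathscr{T}_1$/$\mathscr{T}_2$ dichotomy), and your lower-bound construction (the $n-k$ stars through outside vertices plus a decomposition of $K_k$ into $\lfloor k/2\rfloor$ spanning trees via Hamilton paths/cycles) is the natural companion construction. All the delicate points check out: distinct components of $T_i[S]$ contribute distinct crossing edges, the count of degenerate trees is bounded by $\lfloor k/2\rfloor$ via $\ell(k-1)\le\binom{k}{2}$, and the final integrality step in the odd case does yield $\lambda\le n-\lceil k/2\rceil$.
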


\begin{lemma}\cite{LMS}\label{lem2}
For any connected graph $G$, $\lambda_k(G)\leq \lambda(G)$.
Moreover, the upper bound is sharp.
\end{lemma}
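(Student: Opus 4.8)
The plan is to prove the inequality by an elementary edge-cut argument and then to verify sharpness with an explicit family.

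\emph{The inequality $\lambda_k(G)\leq\lambda(G)$.} If $|V(G)|<k$, then by the stated convention $\lambda_k(G)=1$, and since $G$ is connected $\lambda(G)\geq 1$, so the bound holds. Assume henceforth $n=|V(G)|\geq k$. Fix a minimum edge cut of $G$, i.e.\ a partition $V(G)=X\cup\bar X$ with $X,\bar X\neq\varnothing$ and $|E_G[X,\bar X]|=\lambda(G)$. Since $|X|\geq 1$, $|\bar X|\geq 1$ and $|X|+|\bar X|=n\geq k$, we may choose a set $S$ with $|S|=k$, $S\cap X\neq\varnothing$ and $S\cap\bar X\neq\varnothing$. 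Now let $T$ be any tree connecting $S$. As $T$ is connected and contains vertices in both $X$ and $\bar X$, it must use at least one edge of $E_G[X,\bar X]$. Hence any family of edge-disjoint trees connecting $S$ has at most $|E_G[X,\bar X]|=\lambda(G)$ members, that is $\lambda(S)\leq\lambda(G)$. Therefore $\lambda_k(G)=\min\{\lambda(S'):S'\subseteq V(G),\,|S'|=k\}\leq\lambda(S)\leq\lambda(G)$.

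\emph{Sharpness of the bound.} It suffices to produce, for each $k$, a connected graph attaining equality. Take any tree $G$ of order $n\geq k$; then $\lambda(G)=1$. On the other hand, for any $S\subseteq V(G)$ with $|S|=k$ the tree $G$ contains a unique minimal subtree $T[S]$ spanning $S$, and $T[S]$ has at least one edge; since every subtree of $G$ containing $S$ contains $T[S]$, no two trees connecting $S$ can be edge-disjoint, giving $\lambda(S)=1$. Thus $\lambda_k(G)=1=\lambda(G)$. (For $k=2$ the equality $\lambda_2(G)=\lambda(G)$ is in any case immediate from the definitions, and any graph $G$ with $\lambda(G)=1$ works just as well.)

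I do not expect a genuine obstacle here: the only points needing care are the degenerate range $n<k$, where one falls back on the convention, and the requirement that the chosen $k$-set meet both sides of a minimum edge cut, which is always possible because $n\geq k$. The rest rests on the standard observation that a tree connecting vertices lying on both sides of an edge cut must contain an edge of that cut, combined with the edge-disjointness of the trees counted by $\lambda(S)$.
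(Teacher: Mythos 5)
Your argument is correct. Note that the paper itself gives no proof of this lemma --- it is quoted from the earlier paper \cite{LMS} --- so there is nothing to compare against line by line; but your edge-cut argument is the standard (and surely the intended) one: pick a minimum edge cut $E_G[X,\bar X]$, choose a $k$-set $S$ meeting both $X$ and $\bar X$ (possible since $n\ge k$), observe that every $S$-tree, being connected with vertices on both sides, must use an edge of the cut, and conclude that edge-disjoint $S$-trees number at most $\lambda(G)$. The sharpness example (any tree, where $\lambda_k=\lambda=1$) is also fine and consistent with the paper's Lemma~3, which asserts $\lambda_k(G)\ge 1$ for connected $G$. The only quibble is cosmetic: the convention ``$\lambda_k(G)=1$ when $|V(G)|<k$'' is stated in the paper only for $\kappa_k$, and for the truly degenerate case $G=K_1$ one usually has $\lambda(K_1)=0$, so the inequality in that vacuous regime depends on conventions the paper does not fix; since the definition of $\lambda_k$ assumes $2\le k\le n$ anyway, this does not affect the substance of your proof.
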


\begin{lemma}\cite{LMS}\label{lem3}
Let $k,n$ be two integers with $2\leq k\leq n$. For a connected
graph $G$ of order $n$, $1\leq \lambda_k(G)\leq n-\lceil k/2
\rceil$. Moreover, the upper and lower bounds are sharp.
\end{lemma}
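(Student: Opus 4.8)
The plan is to read off both inequalities from the results already established in the excerpt and then to exhibit explicit graphs realizing the two extreme values. For the upper bound, I would view $G$ as a spanning subgraph of the complete graph $K_n$ on vertex set $V(G)$; Observation~\ref{obs2} then gives $\lambda_k(G)\le \lambda_k(K_n)$, and Lemma~\ref{lem1} evaluates $\lambda_k(K_n)=n-\lceil k/2\rceil$, which is exactly the claimed upper bound. (One could equally invoke Observation~\ref{obs1}, $\lambda_k(G)\le\delta(G)$, but this does not by itself produce the $\lceil k/2\rceil$ term, so the spanning-subgraph route through $K_n$ is the one I would take.)

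For the lower bound, fix any $S\subseteq V(G)$ with $|S|=k$. Since $G$ is connected and $n\ge k\ge 2$, $G$ has a spanning tree $T$, and $T$ is itself a Steiner tree connecting $S$ because $S\subseteq V(G)=V(T)$; hence $\lambda(S)\ge 1$. Minimizing over all $k$-subsets $S$ gives $\lambda_k(G)\ge 1$. For sharpness of the upper bound, $G=K_n$ works verbatim by Lemma~\ref{lem1}. For sharpness of the lower bound, I would take $G=P_n$, the path on $n$ vertices (any tree of order $n$ would do): it has a vertex of degree $1$ since $n\ge 2$, so $\delta(P_n)=1$, and then Observation~\ref{obs1} forces $\lambda_k(P_n)\le\delta(P_n)=1$, while the lower bound just proved gives $\lambda_k(P_n)\ge 1$; hence $\lambda_k(P_n)=1$.

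I do not expect a genuine obstacle here: the statement is essentially a packaging of the complete-graph computation in Lemma~\ref{lem1} with the monotonicity in Observation~\ref{obs2} and the degree bound in Observation~\ref{obs1}. The only point needing a line of care is the choice of extremal graphs — one must pick graphs that simultaneously have order exactly $n$ and attain the extreme value of $\lambda_k$, and $K_n$ (for the maximum) together with $P_n$ (for the minimum) do precisely this. The substantive content, namely the evaluation $\lambda_k(K_n)=n-\lceil k/2\rceil$, is already available as Lemma~\ref{lem1} and would not be reproved.
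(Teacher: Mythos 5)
Your argument is correct: the upper bound follows from Observation~\ref{obs2} applied to $G$ as a spanning subgraph of $K_n$ together with Lemma~\ref{lem1}, the lower bound from the existence of a spanning tree, and the two extremal examples ($K_n$ and a tree of order $n$, using Observation~\ref{obs1} for the latter) establish sharpness. The paper itself only cites this lemma from \cite{LMS} without reproducing a proof, but your derivation is the standard one and uses exactly the tools the paper makes available, so there is nothing to correct.
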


In \cite{LMS}, we characterized graphs with large generalized
$3$-connectivity and obtained the following result.

\begin{lemma}\cite{LMS}\label{lem4}
Let $k,n$ be two integers with $2\leq k\leq n$. For a connected
graph $G$ of order $n$, $\lambda_k(G)=n-\lceil\frac{k}{2}\rceil$ if
and only if $G=K_n$ for $k$ even; $G=K_n\setminus M$ for $k$ odd,
where $M$ is an edge set such that $0\leq |M|\leq \frac{k-1}{2}$.
\end{lemma}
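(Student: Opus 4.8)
The plan is to prove the two implications separately, in each case distinguishing the parity of $k$.

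\emph{Sufficiency.} If $k$ is even and $G=K_n$, then Lemma~\ref{lem1} gives $\lambda_k(K_n)=n-k/2$ at once. Let $k$ be odd and $G=K_n\setminus M$ with $0\le|M|\le(k-1)/2$. Since $G$ is a spanning subgraph of $K_n$, Observation~\ref{obs2} and Lemma~\ref{lem1} already yield $\lambda_k(G)\le\lambda_k(K_n)=n-(k+1)/2$, so it remains to produce, for \emph{every} $S\subseteq V(G)$ with $|S|=k$, a family of $n-(k+1)/2=(n-k)+(k-1)/2$ edge-disjoint $S$-trees. I would build these in two blocks: first, $n-k$ trees, one per vertex $w\in V(G)\setminus S$ --- the star $K_{1,k}$ centered at $w$ when $w$ sees all of $S$, and otherwise a star repaired by rerouting the few missed vertices through another vertex of $V(G)\setminus S$ (there are $\binom{n-k}{2}$ edges available inside $V(G)\setminus S$) or, if needed, through a still-unused edge of $G[S]$; second, $(k-1)/2$ edge-disjoint spanning trees of $G[S]=K_k\setminus M_S$, which exist because $|E(G[S])|\ge\binom{k}{2}-(k-1)/2=(k-1)^2/2=\frac{k-1}{2}(k-1)$ is exactly the number of edges required, as a brief Tutte/Nash--Williams count verifies (with the vertices of $V(G)\setminus S$ supplying spare edges in the degenerate case where $M_S$ piles up on one vertex of $S$). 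The only delicate point here is the case analysis, according to how the edges of $M$ split between $\binom{S}{2}$, $E_G[S,V(G)\setminus S]$ and $\binom{V(G)\setminus S}{2}$.

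\emph{Necessity.} Suppose $\lambda_k(G)=n-\lceil k/2\rceil=:\ell$. By Lemma~\ref{lem3} we have $\lambda(S)\le\ell$ for every $k$-set $S$, and $\lambda(S)\ge\ell$ by definition of $\lambda_k$, so $\lambda(S)=\ell$ for \emph{all} $S$ with $|S|=k$. A first, cheap consequence (essentially Observation~\ref{obs1}) is that $\delta(G)\ge\ell$, so the complement $\overline G$ has maximum degree at most $\lceil k/2\rceil-1$. The substantive step is to strengthen this to: $\overline G$ is edgeless if $k$ is even, and $|E(\overline G)|\le(k-1)/2$ if $k$ is odd. I would argue by contradiction: assuming $\overline G$ has more edges than allowed, pick a $k$-set $S$ that absorbs as much of $\overline G$ as it can --- both endpoints of as many non-edges as possible and, if non-edges are left over, a vertex of $S$ with a $\overline G$-neighbour outside $S$ --- and show $\lambda(S)\le\ell-1$. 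Here one uses that in any packing of $\ell$ edge-disjoint $S$-trees at most $\lfloor k/2\rfloor$ lie inside $G[S]$ (as spanning trees of $G[S]$), so at least $n-k$ of them meet $V(G)\setminus S$, and then one counts: a missing edge inside $G[S]$ drops the spanning-tree-packing number of $G[S]$, while a missing edge between $S$ and $V(G)\setminus S$ either destroys a star or forces some tree to eat edges of $G[S]$ that the internal trees then lack --- in either case the available edges run short of what $\ell$ trees need. This is precisely what happens in the minimal violations $G=K_n-xy$ ($k$ even) and $G=K_n\setminus(\text{matching of size }(k+1)/2)$ ($k$ odd), which can be checked by hand.

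\emph{Main obstacle.} The crux is this counting step for $k\ge4$. Degree sums and crude edge bounds only give $\lambda(S)\le n-2$, which is not enough once $k\ge4$; one must instead pin down the shape of an extremal packing of edge-disjoint $S$-trees, namely that it is (a maximum family of edge-disjoint spanning trees of $G[S]$) together with (stars centered in $V(G)\setminus S$), since any other topology trades an abundant $S$-to-$(V(G)\setminus S)$ edge for a scarce edge of $G[S]$ and can only do worse. Making this ``stars are optimal'' principle precise, and checking it uniformly for all $n\ge k$ (the regime with $n$ close to $k$, where cross-edges are not abundant, needs its own short argument), is where the real work lies; granting it, both directions come down to the bookkeeping above.
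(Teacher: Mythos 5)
This lemma is imported from reference \cite{LMS} and the present paper contains no proof of it, so there is no in-paper argument to compare yours against; I can only judge the proposal on its own terms. Your overall architecture (upper bound from Observation~\ref{obs2} plus Lemma~\ref{lem1}; explicit tree packing for sufficiency; an edge-counting contradiction for necessity) is the right one, and your necessity half is essentially sound: writing $\ell=n-\lceil k/2\rceil$, $a$ for the number of trees inside $G[S]$ and $b=\ell-a$ for the rest, Lemma~\ref{lem5} gives $\ell k-a\le |E(G[S])|+|E_G[S,\bar S]|$ with $a\le\lfloor k/2\rfloor$, which forces the number of non-edges touching $S$ to be $0$ for $k$ even and at most $(k-1)/2$ for $k$ odd; a vertex cover of any $(k+1)/2$ non-edges has size at most $k$ and so extends to a bad $k$-set. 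This is exactly the counting style the authors use in their Lemma~\ref{lem6}, and it closes necessity without the vaguer ``stars are optimal'' principle you invoke.

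The genuine gap is in sufficiency for $k$ odd, and you have flagged it yourself without closing it. Two load-bearing claims are asserted, not proved. First, that $G[S]=K_k\setminus M_S$ packs $(k-1)/2$ edge-disjoint spanning trees: the edge count $\binom{k}{2}-\frac{k-1}{2}=\frac{k-1}{2}(k-1)$ is exactly tight, so Nash--Williams must be verified for \emph{every} partition of $S$, not just the singleton split you mention; this is a known but nontrivial fact and is the entire content of the case $n=k$. Second, and more seriously, that the $n-k$ repaired stars can be made edge-disjoint from one another and from the internal packing. If $w\notin S$ misses $v\in S$, rerouting through another outside vertex $u$ wants the edge $uv$, which $u$'s own star already uses, so repairs cascade; rerouting through $G[S]$ consumes edges of a graph that (when $M\subseteq\binom{S}{2}$) has zero slack for the internal spanning trees. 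Resolving this interference -- typically by a careful pairing/swapping argument, with a separate treatment when $n-k$ is small -- is the actual proof, and as written your text is a plan for it rather than the argument itself.
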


Like \cite {LLMS}, here we will consider the generalized
$3$-edge-connectivity. From Lemma \ref{lem3}, $1\leq
\lambda_3(G)\leq n-2$. In Section $3$, graphs of order $n$ such that
$\lambda_3(G)= n-3$ is characterized.

Let $g(n,k,\ell)$ be the minimal number of edges of a graph $G$ of
order $n$ with $\lambda_k(G)=\ell \ (1\leq \ell\leq
n-\lceil\frac{k}{2}\rceil)$. From Lemma \ref{lem4}, we know that
$g(n,k,n-\lceil\frac{k}{2}\rceil)={n\choose 2}$ for $k$ even;
$g(n,k,n-\lceil\frac{k}{2}\rceil)={n\choose 2}-\frac{k-1}{2}$ for
$k$ odd. It is not easy to determine exact value of the parameter
$g(n,k,\ell)$. So we put our attention to on the case $k=3$. The
exact value of $g(n,3,\ell)$ for $\ell=n-2,n-3,1$ are obtained in
Section $4$. We also give a sharp lower bounds of $g(n,3,\ell)$ for
general $2\leq \ell\leq n-4$.

\section{Graphs with $\lambda_3(G)=n-3$}

After the preparation of the above section, we start to give our
main result. From Lemma \ref{lem3}, we know that for a connected
graph of order $G$ $1\leq \lambda_k(G)\leq
n-\lceil\frac{k}{2}\rceil$. Graphs with
$\lambda_k(G)=n-\lceil\frac{k}{2}\rceil$ has been shown in Lemma
\ref{lem4}. But, it is not easy to characterize graphs with
$\lambda_k(G)=n-\lceil\frac{k}{2}\rceil-1$ for general $k$. So we
focus on the case that $k=3$ and characterizing the graphs with
$\lambda_3(G)=n-3$ in this section.

For the generalized $3$-connectivity, we got the following result in
\cite{LLMS}.

\begin{theorem}\cite{LLMS}\label{th5}
Let $G$ be a connected graph of order $n \ (n\geq 3)$.
$\kappa_3(G)=n-3$ if and only if $\overline{G}=P_4\cup (n-4)K_1$ or
$\overline{G}=P_3\cup iP_2\cup (n-2i-3)K_1 \ (i=0, 1)$ or
$\overline{G}=C_3\cup iP_2\cup (n-2i-3)K_1 \ (i=0, 1)$ or
$\overline{G}=r P_2\cup (n-2r)K_1 \ ( 2\leq r\leq
\lfloor\frac{n}{2}\rfloor)$.
\end{theorem}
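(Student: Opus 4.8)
The plan is to prove both directions. For \emph{necessity} I would extract, from $\kappa_3(G)=n-3$, strong structural information about $\overline{G}$; for \emph{sufficiency} I would verify, family by family, that $\kappa_3(G)=n-3$, using Observation~\ref{obs1} together with Lemma~\ref{lem4} for the upper bound $\kappa_3(G)\le\lambda_3(G)\le n-3$ and an explicit construction of internally disjoint Steiner trees for the matching lower bound.

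\emph{Necessity.} Suppose $G$ is connected with $\kappa_3(G)=n-3$. First, $\overline{G}$ has at least two edges: if $\overline{G}$ has none then $G=K_n$ and $\kappa_3(K_n)=n-2$, and if $\overline{G}$ has exactly one edge then $G=K_n\setminus e$, for which a short direct check gives $\kappa_3(G)=n-2$ (stars at the common neighbours of the critical $S$ plus one path through the two endpoints of $e$); either way $\kappa_3(G)\neq n-3$. Next, since internally disjoint $S$-trees use pairwise distinct edges at each vertex of $S$, we have $\kappa(S)\le\min_{v\in S}d_G(v)$; hence a vertex $v$ with $d_{\overline{G}}(v)\ge 3$ would force $\kappa_3(G)\le d_G(v)\le n-4$, so $\Delta(\overline{G})\le 2$ and $\overline{G}$ is a disjoint union of paths and cycles. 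The key claim is that \emph{$\overline{G}$ has no two vertices $x,y$ with $d_{\overline{G}}(x)=d_{\overline{G}}(y)=2$ and $xy\notin E(\overline{G})$}: otherwise take any third vertex $z$ and put $S=\{x,y,z\}$; since $d_G(x)=d_G(y)=n-3$, each of the $n-3$ internally disjoint $S$-trees uses exactly one edge at $x$ and one at $y$, so these edges exhaust the edge set at $x$, which contains $xy$, and the tree using $xy$ then needs a further edge incident with $x$ or with $y$ to reach $z$---a contradiction. Consequently the degree-$2$ vertices of $\overline{G}$ are pairwise adjacent, so they form the empty set, a single vertex, a single edge, or a triangle, and accordingly $\overline{G}=sP_2\cup(n-2s)K_1$, $P_3\cup sP_2\cup(n-2s-3)K_1$, $P_4\cup sP_2\cup(n-2s-4)K_1$, or $C_3\cup sP_2\cup(n-2s-3)K_1$ for some $s\ge 0$. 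It remains to bound $s$, and here a more involved counting argument of the same spirit is needed: for a well-chosen $3$-set $S$ meeting the extra copies of $P_2$---say $S=\{a,b_1,c_1\}$ with $a$ the centre of the $P_3$ or a vertex of the $C_3$ and $b_1,c_1$ endpoints of two distinct extra edges, or $S=\{a_2,a_3,b_1\}$ with $a_1a_2a_3a_4$ the $P_4$ and $b_1b_2$ an extra edge---the $n-3$ edges forced at the degree-$(n-3)$ vertices of $S$, together with the at most three edges of $G[S]$, cannot be apportioned among $n-3$ pairwise internally disjoint $S$-trees without some tree spending too many of the scarce vertices outside $S$ or too many edges at a low-degree vertex of $S$; this shows $s\le 1$ when $\overline{G}$'s nontrivial component is $P_3$ or $C_3$, and $s=0$ when it is $P_4$, leaving exactly the four families in the statement.

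\emph{Sufficiency.} Each graph $G$ in the list is connected and distinct from $K_n$ and $K_n\setminus e$, so $\kappa_3(G)\le\lambda_3(G)\le n-3$ by Observation~\ref{obs1} and Lemma~\ref{lem4}. For $\kappa_3(G)\ge n-3$ I would show that every $3$-set $S$ admits $n-3$ internally disjoint $S$-trees, distinguishing cases by how many vertices of $S$ have $G$-degree $n-3$ (equivalently, are degree-$2$ vertices of $\overline{G}$) and in which component of $\overline{G}$ they lie. In each case most of the trees are stars $K_{1,3}$ centred at the common neighbours of $S$ in $G$---of which there are all but a bounded number of vertices, since $\overline{G}$ is very sparse---and the remaining bounded number of trees are built from the few ``special'' vertices, namely the endpoints and internal vertices of $\overline{G}$'s nontrivial component and the partners of any $P_2$-vertices of $S$, chosen so that each such tree reaches the vertices of $S$ it must reach and all the trees stay pairwise internally disjoint. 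In the $P_4$ case the characteristic device is that neither endpoint $a_1$ nor $a_4$ of the $P_4$ is by itself adjacent to every vertex of a troublesome $S$, so one merges $a_1$ and $a_4$ into a single tree.

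The principal obstacle is the sufficiency construction: it splits into many subcases and demands careful bookkeeping of which edges remain at the degree-$(n-3)$ vertices of $S$ once the star trees are placed, and of which outside vertices are still free, so that the last few trees can be assembled internally disjointly while still spanning $S$; small $n$, where $\overline{G}$'s nontrivial component together with the extra edges occupies almost all of $V(G)$ and leaves essentially no spare common neighbours, must be checked separately. On the necessity side the conceptual core is the degree-$2$ claim and its refinement bounding the number of extra copies of $P_2$; everything else there is routine.
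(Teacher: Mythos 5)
First, note that the paper does not actually prove Theorem~\ref{th5}: it is quoted from \cite{LLMS}, so there is no in-paper proof to compare against. Judged on its own, your skeleton is the natural one and parallels the strategy the paper uses for the edge analogue (Theorem~\ref{th2}), and the steps you actually argue are correct: $|E(\overline{G})|\ge 2$, the bound $\Delta(\overline{G})\le 2$ via $\kappa(S)\le\min_{v\in S}d_G(v)$, and the key claim that no two degree-$2$ vertices of $\overline{G}$ can be nonadjacent in $\overline{G}$ (the vertex analogue of Lemma~\ref{lem7}) are all sound, and together with $\Delta(\overline{G})\le 2$ they correctly reduce $\overline{G}$ to one of the four shapes $sP_2$, $P_3\cup sP_2$, $P_4\cup sP_2$, $C_3\cup sP_2$ (plus isolated vertices).

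There is, however, a genuine gap exactly where this theorem differs from its edge-connectivity counterpart. In Theorem~\ref{th2} arbitrarily many extra $P_2$ components are allowed, whereas here one must prove $s\le 1$ for the $P_3$ and $C_3$ families and $s=0$ for the $P_4$ family; this is the entire content that distinguishes the vertex version, and you only assert that ``a more involved counting argument of the same spirit'' handles it. It does not follow from anything you have established. For instance, for $\overline{G}=P_4\cup P_2\cup(n-6)K_1$ with $P_4=a_1a_2a_3a_4$ and $P_2=b_1b_2$, one takes $S=\{a_2,a_3,b_1\}$, observes that $d_G(a_2)=d_G(a_3)=n-3$ forces each of the $n-3$ putative trees to use exactly one edge at $a_2$ and one at $a_3$ (so all such edges are used), deduces that the edges $a_2b_2$ and $a_3b_2$ must lie in one common tree $T$ (since internal vertices outside $S$ are pairwise disjoint), and then shows $T$ cannot reach $b_1$: the edge $b_1b_2$ is absent, $T$ may not use a second edge at $a_2$ or at $a_3$, and every other vertex outside $S$ through which $T$ could route is either committed to another tree or would force such a second edge. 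An argument of this precision is needed for each of the three families, and nothing in your sketch supplies it. The sufficiency direction is likewise only a plan (you say so yourself); the constructions are routine but case-heavy, comparable to the paper's proof of Theorem~\ref{th2}. So the proposal is a correct outline whose pivotal necessity step and whose entire sufficiency verification remain unproved.
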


But, for the edge case we will show that the statement is different.
Before giving our main result, we need some preparations.

Choose $S\subseteq V(G)$. Then let $\mathscr{T}$ be a maximum set of
edge-disjoint trees connecting $S$ in $G$. Let $\mathscr{T}_1$ be
the set of trees in $\mathscr{T}$ whose edges belong to $E(G[S])$,
and let $\mathscr{T}_2$ be the set of trees containing at least one
edge of $E[S,\bar{S}]$. Thus $\mathscr{T}=\mathscr{T}_1\cup
\mathscr{T}_2$.

In \cite{LMS}, we obtained the following useful lemma.

\begin{lemma}\cite{LMS}\label{lem5}
Let $S\subseteq V(G)$, $|S|=k$ and $T$ be a tree connecting $S$. If
$T\in \mathscr{T}_1$, then $T$ uses $k-1$ edges of $E(G[S])\cup
E_G[S,\bar{S}]$; If $T\in \mathscr{T}_2$, then $T$ uses at least $k$
edges of $E(G[S])\cup E_G[S,\bar{S}]$.
\end{lemma}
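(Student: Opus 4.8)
The plan is to work with a single $S$-tree $T$ and to partition its vertex set as $V(T) = S \cup W$, where $W = V(T) \setminus S$ is the set of Steiner vertices of $T$; the quantity to be estimated is the number of edges of $T$ lying in $E(G[S]) \cup E_G[S,\bar{S}]$, which are exactly the edges of $T$ having at least one endpoint in $S$.

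First I would treat the case $T \in \mathscr{T}_1$. By definition every edge of $T$ lies in $E(G[S])$ and hence has both endpoints in $S$. If some vertex $v$ of $T$ lay outside $S$, then, $T$ being a connected graph on at least two vertices, $v$ would be incident with an edge of $T$ having an endpoint outside $S$, a contradiction. Hence $V(T) = S$, so $T$ is a spanning tree of $G[S]$ and uses exactly $|S| - 1 = k - 1$ edges, all of them in $E(G[S]) \subseteq E(G[S]) \cup E_G[S,\bar{S}]$.

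For the case $T \in \mathscr{T}_2$, the key point is that $T$ contains an edge of $E_G[S,\bar{S}]$, whose endpoint in $\bar{S}$ necessarily lies in $V(T) \setminus S = W$; hence $W \neq \varnothing$. Now the edges of $T$ with no endpoint in $S$ are precisely the edges of the induced subgraph $T[W]$, which is a forest; writing $c$ for the number of its components, we have $|E(T[W])| = |W| - c$ and $c \geq 1$ since $W \neq \varnothing$. Because $|E(T)| = |V(T)| - 1 = k + |W| - 1$, the number of edges of $T$ in $E(G[S]) \cup E_G[S,\bar{S}]$ equals $|E(T)| - |E(T[W])| = (k + |W| - 1) - (|W| - c) = k - 1 + c \geq k$.

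I do not expect a genuine obstacle here: once the vertex partition and the forest $T[W]$ are in place, both parts reduce to an elementary edge count. The only point that needs attention is recognizing that membership in $\mathscr{T}_2$ forces $W \neq \varnothing$, equivalently $c \geq 1$ — this is exactly what yields the one extra edge over the $k-1$ that a tree in $\mathscr{T}_1$ uses.
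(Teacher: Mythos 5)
Your proof is correct and complete: the partition $V(T)=S\cup W$, the identification of the edges outside $E(G[S])\cup E_G[S,\bar S]$ with the edges of the forest $T[W]$, and the count $k-1+c$ with $c\ge 1$ forced by membership in $\mathscr{T}_2$ give exactly the stated bounds. Note that the paper only quotes this lemma from \cite{LMS} without reproducing a proof, so there is nothing to compare against here; your argument is the standard counting proof one would expect for this statement.
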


By Lemma \ref{lem6}, we can derived the following result.

\begin{lemma}\label{lem6}
Let $G$ be a connected graph of order $n \ (n\geq 3)$, and $\ell$ be
a positive integer. If we can find a set $S\subseteq V(G)$ with
$|S|=3$ satisfying one of the following conditions, then
$\lambda_3(G)\leq n-\ell$.

$(1)$ $\overline{G}[S]=3K_1$ and $|E_{\overline{G}}[S,\bar{S}]\cup
\overline{G}[S]|\geq 3\ell -7$;

$(2)$ $\overline{G}[S]=P_2\cup K_1$ and
$|E_{\overline{G}}[S,\bar{S}]\cup \overline{G}[S]|\geq 3\ell -8$;

$(3)$ $\overline{G}[S]=P_3$ and $|E_{\overline{G}}[S,\bar{S}]\cup
\overline{G}[S]|\geq 3\ell -10$;

$(4)$ $\overline{G}[S]=K_3$ and $|E_{\overline{G}}[S,\bar{S}]\cup
\overline{G}[S]|\geq 3\ell -11$.
\end{lemma}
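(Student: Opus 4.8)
\noindent\emph{Proof sketch.} Since $\lambda_3(G)=\min\{\lambda(S'):S'\subseteq V(G),\ |S'|=3\}$, it suffices to fix one set $S=\{x,y,z\}$ satisfying one of the four conditions and to show that $\lambda(S)\le n-\ell$; the conclusion $\lambda_3(G)\le n-\ell$ is then immediate. Put $m(S)=|E(G[S])\cup E_G[S,\bar{S}]|$, the number of edges of $G$ having at least one end in $S$. The number of edges of $K_n$ with an end in $S$ is $\binom{3}{2}+3(n-3)=3n-6$, so $m(S)+|E_{\overline{G}}[S,\bar{S}]\cup\overline{G}[S]|=3n-6$, and hence each of the four hypotheses is simply an upper bound on $m(S)$: in cases $(1)$--$(4)$ respectively we get $m(S)\le 3n-3\ell+1,\ 3n-3\ell+2,\ 3n-3\ell+4,\ 3n-3\ell+5$.

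Next, let $\mathscr{T}$ be a maximum set of edge-disjoint trees connecting $S$, and split $\mathscr{T}=\mathscr{T}_1\cup\mathscr{T}_2$ as in the paragraph preceding Lemma~\ref{lem5}. Applying Lemma~\ref{lem5} with $k=3$: each tree of $\mathscr{T}_1$ uses exactly two edges of $E(G[S])$, and each tree of $\mathscr{T}_2$ uses at least three edges of $E(G[S])\cup E_G[S,\bar{S}]$. Since the trees are pairwise edge-disjoint and all of these edges meet $S$, this yields $2|\mathscr{T}_1|+3|\mathscr{T}_2|\le m(S)$, and therefore
\[
\lambda(S)=|\mathscr{T}_1|+|\mathscr{T}_2|\ \le\ \Big\lfloor\tfrac{m(S)+|\mathscr{T}_1|}{3}\Big\rfloor .
\]

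It remains to bound $|\mathscr{T}_1|$, and this is exactly where the four cases diverge, since a tree of $\mathscr{T}_1$ is precisely a spanning tree of $S$ contained in $G[S]$, i.e.\ a copy of $P_3$ sitting inside $G[S]$. In cases $(1)$ and $(2)$ the graph $G[S]$ is $K_3$ or $P_3$; it contains a $P_3$ but has only $3$ or $2$ edges, and two edge-disjoint copies of $P_3$ would need $4$ edges, so $|\mathscr{T}_1|\le 1$. In cases $(3)$ and $(4)$ the graph $G[S]$ has at most one edge, hence is disconnected and contains no $P_3$, so $|\mathscr{T}_1|=0$; moreover one can sharpen the estimate there, because at most one tree of $\mathscr{T}_2$ can use the (at most one) edge of $G[S]$, so every remaining tree of $\mathscr{T}_2$ uses at least three edges of $E_G[S,\bar{S}]$. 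Feeding the case-dependent value of $|\mathscr{T}_1|$ and the case-dependent upper bound on $m(S)$ into the displayed inequality, and then resolving the floor function, should give $\lambda(S)\le n-\ell$ in each case, completing the proof.

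The step I expect to be the real work is this last substitution: one has to track carefully, case by case, how many of the at most three edges of $G[S]$ can be reused among the trees of $\mathscr{T}_1\cup\mathscr{T}_2$ and how many crossing edges each tree of $\mathscr{T}_2$ is forced to consume, since the precise constants in the hypotheses are correct only when this accounting is pushed as far as the structure of $G[S]$ permits. I expect the cases $\overline{G}[S]=P_2\cup K_1$ and $\overline{G}[S]=P_3$ to be the most delicate; there it is worth checking the extremal configuration explicitly—a $P_3$ inside $G[S]$ together with a largest possible packing of stars centred at vertices of $\bar{S}$—against the claimed threshold, and adjusting the bound on $|\mathscr{T}_2|$ (and thereby the constant) accordingly.
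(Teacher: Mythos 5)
Your skeleton is the same as the paper's: the decomposition $\mathscr{T}=\mathscr{T}_1\cup\mathscr{T}_2$, the identity $m(S)+|E_{\overline{G}}[S,\bar S]\cup E(\overline{G}[S])|=3n-6$, and Lemma \ref{lem5}; your arithmetic is in fact more careful than the paper's (its proof of case $(3)$ computes $3+3(n-3)-(3\ell-10)=3n-3\ell+2$, which should be $3n-3\ell+4$). Case $(1)$ is complete as you wrote it: $m(S)\le 3n-3\ell+1$ and $|\mathscr{T}_1|\le 1$ give $\lambda(S)\le\lfloor(3n-3\ell+2)/3\rfloor=n-\ell$. But the substitution you defer does not close the other three cases: in case $(2)$, $|\mathscr{T}_1|\le 1$ and $m(S)\le 3n-3\ell+2$ give only $\lambda(S)\le\lfloor(3n-3\ell+3)/3\rfloor=n-\ell+1$, and in cases $(3)$ and $(4)$, $|\mathscr{T}_1|=0$ with $m(S)\le 3n-3\ell+4$ (resp.\ $3n-3\ell+5$) again gives only $n-\ell+1$. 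So as written your proposal proves only case $(1)$, and your closing hedge about ``adjusting the constant'' sits exactly where the proof is missing.

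Moreover the gap cannot be closed, because cases $(2)$--$(4)$ are false as printed; no finer accounting of how trees share the edges of $G[S]$ will help. For $(2)$, take $G=K_n\setminus e$ and let $S$ contain both ends of $e$: then $\overline{G}[S]=P_2\cup K_1$ and the quantity equals $1\ge 3\cdot 3-8$, so the lemma would give $\lambda_3(G)\le n-3$, contradicting Lemma \ref{lem4}, by which $\lambda_3(K_n\setminus e)=n-2$. For $(3)$, take $\overline{G}=P_3\cup(n-3)K_1$ and $S=V(P_3)$: the quantity equals $2\ge 3\cdot 4-10$, so the lemma would give $\lambda_3(G)\le n-4$, whereas the characterization theorem at the end of this section gives $\lambda_3(G)=n-3$ (and indeed the $n-3$ stars centred at the vertices of $\bar S$ realize $\lambda(S)=n-3$). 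For $(4)$, take $\overline{G}=C_3\cup(n-3)K_1$ and $S=V(C_3)$: the quantity equals $3\ge 3\cdot 4-11$, again forcing $\lambda_3(G)\le n-4$ against the true value $n-3$. The hypotheses your own inequality $\lambda(S)\le\lfloor(m(S)+|\mathscr{T}_1|)/3\rfloor$ actually supports are $3\ell-7$ in cases $(1)$ and $(2)$ and $3\ell-8$ in cases $(3)$ and $(4)$; the $K_n\setminus e$ example shows $3\ell-7$ cannot be weakened in case $(2)$. With those corrected constants your argument goes through verbatim, so the right outcome of your analysis is not a sharper bound on $|\mathscr{T}_2|$ but a corrected statement of the lemma.
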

\begin{proof}
We only show that $(1)$ and $(3)$ hold, $(2)$ and $(4)$ can be
proved similarly.

$(1)$ Since $|E_{\overline{G}}[S,\bar{S}]\cup \overline{G}[S]|\geq
3\ell -7$, we have $|E(G[S])\cup E_G[S,\bar{S}]|\leq 3+3(n-3)-(3\ell
-7)=3n-3\ell +1$. Since $\overline{G}[S]=3K_1$, we have $G[S]=K_3$.
Therefore, $|E(G[S])|=3$, and so there exists at most one tree
belonging to $\mathscr{T}_1$ in $G$. If there exists one tree
belonging to $\mathscr{T}_1$, namely $|\mathscr{T}_1|=1$, then the
other trees connecting $S$ must belong to $\mathscr{T}_2$. From
Lemma \ref{lem6}, each tree belonging to $\mathscr{T}_2$ uses at
least $3$ edges in $E(G[S])\cup E_G[S,\bar{S}]$. So the remaining at
most $(3n-3\ell +1)-2$ edges of $E(G[S])\cup E_G[S,\bar{S}]$ can
form at most $\frac{3n-3\ell -1}{3}$ trees. Thus $\lambda_3(G)\leq
\lambda(S)=
|\mathscr{T}|=|\mathscr{T}_1|+|\mathscr{T}_2|=1+|\mathscr{T}_2|\leq
n-\ell+\frac{2}{3}$, which results in $\lambda_3(G)\leq n-\ell$
since $\lambda_3(G)$ is an integer. Suppose that all trees
connecting $S$ belong to $\mathscr{T}_2$. Then $\lambda(S)=
|\mathscr{T}|=|\mathscr{T}_2|\leq \frac{3n-3\ell +1}{3}$, which
implies that $\lambda_3(G)\leq \lambda(S)=n-\ell$.

$(3)$ Since $|E_{\overline{G}}[S,\bar{S}]\cup \overline{G}[S]|\geq
3\ell -10$, we have $|E(G[S])\cup E_G[S,\bar{S}]|\leq
3+3(n-3)-(3\ell -10)=3n-3\ell +2$. Since $\overline{G}[S]=P_3$, we
have $G[S]=P_2\cup K_1$. Since $|E(G[S])|=1$, there exists no tree
belonging to $\mathscr{T}_1$. So each tree connecting $S$ must
belong to $\mathscr{T}_2$. From Lemma \ref{lem6}, $\lambda(S)\leq
|\mathscr{T}|=|\mathscr{T}_2|\leq \frac{3n-3\ell +2}{3}$, which
implies that $\lambda_3(G)\leq \lambda(S)=n-\ell$ since
$\lambda_3(G)$ is an integer.
\end{proof}

\begin{lemma}\label{lem7}
Let $G$ be a connected graph with minimum degree $\delta$. If there
are two adjacent vertices of degree $\delta$, then $\lambda_k(G)\leq
\delta(G)-1$.
\end{lemma}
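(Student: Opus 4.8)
The plan is to exhibit a set $S$ of three vertices whose local edge-connectivity $\lambda(S)$ is at most $\delta-1$, which by definition forces $\lambda_k(G)\le\lambda_3(G)\le\delta-1$ (using Observation~\ref{obs1} to pass from $\lambda_3$ down to $\lambda_k$ for larger $k$, and Lemma~\ref{lem3} together with the fact that $\lambda_k$ is nonincreasing in $k$ when we restrict to sets through the relevant vertices — actually the cleanest route is simply: find $S$ with $|S|=k$ and $\lambda(S)\le\delta-1$). Let $u,v$ be the two adjacent vertices of degree $\delta$. The heuristic is that any tree connecting a set $S$ that contains both $u$ and $v$ must "leave" the pair $\{u,v\}$ through the edges incident to $u$ or to $v$, and there are only $2\delta$ such edges, one of which is the edge $uv$ itself shared between them; a counting argument on these boundary edges should cap the number of edge-disjoint trees.

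First I would treat the case $k=2$ separately (it is essentially folklore): take $S=\{u,v\}$; the edge-disjoint $u$–$v$ paths each use a distinct edge at $u$, so $\lambda(S)\le\delta$, and if equality held every edge at $u$ would start such a path, in particular the edge $uv$ would, using up one unit; the remaining $\delta-1$ paths from $u$ must each also reach $v$ using a distinct edge at $v$, but one edge at $v$ (namely $vu$) is already consumed, so at most $\delta-1$ paths total — contradiction, hence $\lambda(S)\le\delta-1$. For $k\ge3$ I would pick $S$ to contain $u$ and $v$ together with $k-2$ further vertices (possible since $n\ge k$), set $W=\{u,v\}$, and bound the number of edge-disjoint $S$-trees by counting edges of $G$ with at least one endpoint in $W$: there are at most $2\delta-1$ of them (the $\delta$ edges at $u$ plus the $\delta$ edges at $v$ minus the doubly-counted edge $uv$). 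Every $S$-tree $T$ must connect $u$ to $v$ inside $T$, hence must use at least two edges incident to $W$ unless the single edge $uv\in T$ does the job — but $uv$ can lie in only one of the edge-disjoint trees. So one tree may use exactly one edge of the $W$-incident set (that tree contains $uv$), and every other tree uses at least two; if there were $t$ trees total this gives $1+2(t-1)\le 2\delta-1$, i.e. $t\le\delta$, and pushing the inequality: if $t=\delta$ then every tree except the $uv$-tree uses exactly two $W$-edges and all $2\delta-1$ edges are used, which forces the $uv$-tree to also contain another $W$-edge (since $2\cdot(\delta-1)+1=2\delta-1$ is tight only if... ) — here a short parity/tightness check shows a contradiction, yielding $t\le\delta-1$.

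Alternatively, and perhaps more cleanly, I would invoke Lemma~\ref{lem5}: with $S$ as above and the edge set $E(G[S])\cup E_G[S,\bar S]$ playing the role of the boundary, note that the only edges of $G$ incident to $W=\{u,v\}$ that can appear in an $S$-tree and "help" are among the $\le 2\delta-1$ edges described; a tree in $\mathscr{T}_1$ uses $k-1$ boundary edges and a tree in $\mathscr{T}_2$ at least $k$, but more to the point each such tree must include at least one edge at $u$ and at least one at $v$ (to connect $u$ and $v$ within the tree), with the shared edge $uv$ usable only once. Summing these local constraints over all trees in $\mathscr T$ and dividing by the available count gives $|\mathscr T|\le\delta-1$. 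I would phrase the argument so that the edge $uv$ is the crux: it is the unique edge that simultaneously discharges the "must touch $u$" and "must touch $v$" requirements, so at most one tree enjoys that economy and all others pay double.

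The main obstacle I anticipate is making the tightness step fully rigorous for $k\ge 3$: unlike the $k=2$ path case, an $S$-tree can touch $W$ in complicated ways, and one must be careful that "uses at least two $W$-incident edges" is actually forced — a tree could conceivably contain $u$ and $v$ as leaves attached to the same third vertex, which indeed uses two $W$-edges, or contain the edge $uv$ and nothing else at $W$ if $u,v$ are both leaves off each other, which is the single exceptional tree. I would handle this by a clean case split on whether $uv\in E(T)$: if not, then $T$ restricted to a $u$–$v$ path within $T$ leaves $u$ via some edge and enters $v$ via some edge, and these are distinct (being incident to different endpoints and not equal to $uv$), so $\ge 2$ boundary edges; if $uv\in E(T)$, that tree is the unique one allowed to use only this single $W$-edge, and at most one tree can contain $uv$ by edge-disjointness. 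Everything else is the arithmetic $1+2(t-1)\le 2\delta-1\Rightarrow t\le\delta$, with the final $-1$ coming from observing the tight case cannot occur because it would require a tree both containing $uv$ and using exactly one $W$-edge while $2\delta-1$ is odd and the other $t-1$ trees consume an even number $2(t-1)=2\delta-2$, leaving exactly one edge for the $uv$-tree — which is consistent, so in fact I will instead derive $t\le\delta-1$ by noting that the $uv$-tree, containing edge $uv$, still must connect $u$ (or $v$) to the other $k-2$ vertices of $S$ and hence uses a second $W$-incident edge once $k\ge 3$ (if all of $S\setminus\{u,v\}$ lies "beyond" $u$ or $v$), or more simply by applying the $k=2$ argument to the pair $\{u,v\}$ directly since $\lambda_k(G)\le\lambda(\{u,v\})=\lambda_2(G)\le\delta-1$ via Lemma~\ref{lem2}-style monotonicity. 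That last observation — that $\lambda_k(G)\le\lambda(S')$ for any $S'$ of size $2$, combined with the $k=2$ case — likely collapses the whole proof to the two-vertex computation, so I expect the real content to be just the $k=2$ argument plus a citation of monotonicity of $\lambda_k$ in $k$.
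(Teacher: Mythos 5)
Your central counting argument for $k\ge 3$ is sound and is essentially the paper's own proof in different clothing: the paper distributes the $\delta$ edges at $u$ and the $\delta$ edges at $v$ among the hypothetical $\delta$ trees by pigeonhole and then observes that the tree containing $uv$ cannot reach the third vertex of $S$ without a second edge at $\{u,v\}$; you count the $2\delta-1$ edges incident to $W=\{u,v\}$ and charge at least two of them to every tree, including the $uv$-tree, because that tree must still reach $S\setminus\{u,v\}$. Either way the presence of a \emph{third} vertex in $S$ is the crux, and your version of that step (``the $uv$-tree uses a second $W$-incident edge once $k\ge 3$'') is correct and gives $2t\le 2\delta-1$, hence $t\le\delta-1$.

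However, your $k=2$ case is wrong, and this matters because you propose at the end to collapse the whole proof to it. The claimed contradiction does not exist: if one of the $\delta$ paths is the single edge $uv$, the remaining $\delta-1$ paths can perfectly well use the $\delta-1$ remaining edges at $u$ and the $\delta-1$ remaining edges at $v$, so the count closes with $\delta$ paths and no contradiction. Indeed the statement $\lambda(\{u,v\})\le\delta-1$ is false: in $K_4$ every vertex has degree $\delta=3$, any two vertices are adjacent, and $\lambda(\{u,v\})=3=\delta$ (the edge $uv$ plus two paths through the other two vertices). Consequently the ``cleanest route'' you favor --- $\lambda_k(G)\le\lambda_2(G)\le\delta-1$ via monotonicity --- is unavailable; the lemma itself only holds for $k\ge 3$ (the paper's proof silently assumes this by adjoining a vertex $v_3$ to $S$). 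Your write-up should delete the $k=2$ detour and the final reduction to it, and commit to the boundary-edge count with the third vertex doing the work; note also that the intermediate hand-wringing about whether the tight case $t=\delta$ can occur becomes unnecessary once you observe that \emph{every} tree, including the one containing $uv$, uses at least two $W$-incident edges.
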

\begin{proof}
It is clear that $\lambda(G)\leq \delta$ and $\lambda_k(G)\leq
\lambda(G)$ by Lemma \ref{lem2}. So $\lambda_k(G)\leq \delta$.

Suppose that there are two adjacent vertices $v_1$ and $v_2$ of
degree $\delta$ and $\delta$. Besides $v_1$ and $v_2$, we choose a
vertex $v_3$ in $V(G\setminus \{v_1,v_2\})$ to get a $k$-set $S$
containing $v_1,v_2,v_3$. Suppose $T_1,T_2,\cdots,T_{\delta}$ are
$\delta$ pairwise edge-disjoint trees connecting $S$. Since $G$ is
simple graph, obviously the $\delta$ edges incident $v_1$ must be
contained in $T_1,T_2,\cdots,T_{\delta}$, respectively, and so are
the $\delta$ edges incident $v_2$. Without loss of generality, we
may assume that the edge $v_1v_2$ is contained in $T_1$. But, since
$T_1$ is a tree connecting $S$, it must contain another edge
incident with $v_1$ or $v_2$, a contradiction. Thus
$\lambda_k(G)\leq \delta-1$.
\end{proof}

A subset $M$ of $E(G)$ is called a \emph{matching} of $G$ if the
edges of $M$ satisfy that no two of them are adjacent in $G$. A
matching $M$ saturates a vertex $v$, or $v$ is said to be
\emph{$M$-saturated}, if some edge of $M$ is incident with $v$;
otherwise, $v$ is \emph{$M$-unsaturated}. $M$ is a \emph{maximum
matching} if $G$ has no matching $M'$ with $|M'|>|M|$.

\begin{theorem}\label{th2}
Let $G$ be a connected graph of order $n$. Then $\lambda_3(G)=n-3$
if and only if $\overline{G}=r P_2\cup (n-2r)K_1 \ (2\leq r\leq
\lfloor\frac{n}{2}\rfloor)$ or $\overline{G}=P_4\cup sP_2\cup
(n-2s-4)K_1 \ (0\leq s\leq \lfloor\frac{n-4}{2}\rfloor)$ or
$\overline{G}=P_3\cup t P_2\cup (n-2t-3)K_1 \ (0\leq t\leq
\lfloor\frac{n-3}{2}\rfloor)$ or $\overline{G}=C_3\cup t P_2\cup
(n-2t-3)K_1 \ (0\leq t\leq \lfloor\frac{n-3}{2}\rfloor)$.
\end{theorem}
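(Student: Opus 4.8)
The plan is to prove the two directions separately, relying heavily on Lemma~\ref{lem6} for the easy ``only if'' direction and on an explicit construction of edge-disjoint trees for the ``if'' direction.

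For the \emph{sufficiency} (the ``if'' direction), I would fix $G$ to be one of the four families and show $\lambda_3(G)\geq n-3$; combined with Lemma~\ref{lem3} (which gives $\lambda_3(G)\leq n-2$) and a short argument ruling out $\lambda_3(G)=n-2$, this pins down the value. To see $\lambda_3(G)\neq n-2$ I would invoke Lemma~\ref{lem4}: if $\lambda_3(G)=n-2=n-\lceil 3/2\rceil$ then $\overline{G}$ is a matching of size at most $1$, which none of the four listed complements is. The real work is the lower bound $\lambda_3(G)\geq n-3$: for an arbitrary $3$-set $S=\{x,y,z\}$ I must exhibit $n-3$ edge-disjoint $S$-trees. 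The standard technique (as in the $K_n$ case of Lemma~\ref{lem1}) is to use the $n-3$ vertices of $V(G)\setminus S$ as ``hubs'': for each such vertex $w$, if $w$ is adjacent to all of $x,y,z$ then the star $\{wx,wy,wz\}$ is an $S$-tree, and these stars are pairwise edge-disjoint. Since $\overline{G}$ has very few edges in each family, only a bounded number of vertices $w$ can fail to be adjacent to all of $S$, and for those bad $w$ one repairs the star by rerouting through $S$ itself or through another low-degree vertex, using the fact that $G[S]$ is nearly complete. I would organize this as a case analysis on $\overline{G}[S]$ (which is one of $3K_1$, $P_2\cup K_1$, $P_3$, $C_3$, with the last two occurring only when the single $P_3$ or $C_3$ of $\overline G$ sits inside $S$), mirroring the four cases of Lemma~\ref{lem6}.

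For the \emph{necessity} (the ``only if'' direction), suppose $\lambda_3(G)=n-3$. By Lemma~\ref{lem4}, $\overline{G}$ is not a matching of size $\le 1$, and Observation~\ref{obs1} plus Lemma~\ref{lem7} give constraints on vertices of small degree. The heart of the argument is a structural dichotomy on $\overline G$: I claim $\overline G$ has at most one component with more than one edge, and that component is one of $P_3$, $C_3$, $P_4$. Suppose not; then $\overline G$ contains either two vertex-disjoint paths of length $2$, or a path/cycle on $\ge 4$ vertices that is not $P_4$ (i.e.\ $P_5$, $K_{1,3}$, $C_4$, or larger), or a triangle plus something extra sharing a vertex. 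In each such case I would choose a $3$-set $S$ for which the relevant quantity $|E_{\overline G}[S,\bar S]\cup \overline G[S]|$ is large enough to trigger the appropriate inequality of Lemma~\ref{lem6} with $\ell=n-2$, forcing $\lambda_3(G)\le n-(n-2)=2 < n-3$ (for $n\ge 6$; small $n$ handled directly), a contradiction. Concretely: if $\overline G$ has a vertex $v$ of degree $\ge 3$ in $\overline G$, or two independent edges in $\overline G$ each incident to a ``$\ge 2$-degree'' vertex, I put $v$ (or the high-degree vertices) into $S$ so that many non-edges of $G$ are concentrated on $S\cup (S,\bar S)$; Lemma~\ref{lem6} then applies. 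Once $\overline G$ is forced to be $(\text{one of }P_3,C_3,P_4)\cup(\text{a matching})$, I must also bound the matching: the bounds $2\le r\le\lfloor n/2\rfloor$, $0\le s\le\lfloor(n-4)/2\rfloor$, etc., are just the obvious packing constraints, while the lower bound $r\ge 2$ in the pure-matching family follows because $r\le 1$ gives $\lambda_3=n-2$ by Lemma~\ref{lem4}.

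The main obstacle will be the necessity direction, specifically verifying that \emph{every} forbidden configuration in $\overline G$ yields, via a clever choice of $S$, a value of $|E_{\overline G}[S,\bar S]\cup\overline G[S]|$ meeting the threshold in the correct case of Lemma~\ref{lem6}. This requires a somewhat delicate case analysis, because the threshold depends on the isomorphism type of $\overline G[S]$, so the choice of $S$ must simultaneously capture enough ``missing mass'' \emph{and} land in a favorable case; boundary values of $n$ (say $n=4,5$) likely need to be checked by hand since the inequalities in Lemma~\ref{lem6} degrade there. A secondary technical point is making the repair step in the sufficiency proof uniform across the families — I expect the $C_3$ and $P_4$ cases to need the most care, since then two of the ``hub'' vertices or an extra edge inside $\bar S$ must be handled, but in every case at most a constant number of stars need rerouting and $G[S]$ being $K_3$ or $K_3$-minus-an-edge provides enough room. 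This is routine but tedious, which is why I would relegate the detailed tree constructions to a sequence of claims within the proof.
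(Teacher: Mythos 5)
Your overall plan coincides with the paper's: for the ``only if'' direction, bound $\Delta(\overline G)$ by $2$ via $\lambda_3(G)\le\delta(G)$ so that the components of $\overline G$ are paths and cycles, then kill the forbidden configurations (two large components; $P_{\ge 5}$ or $C_{\ge 4}$) with the counting machinery of Lemmas~\ref{lem5}--\ref{lem7}; for the ``if'' direction, build $n-3$ edge-disjoint $S$-trees from star ``hubs'' in $\bar S$ plus a bounded number of repair trees exploiting the matching structure of $\overline G$. The paper does exactly this (its Claims 1 and 2, followed by the long case analysis on $|S\cap V(C_3)|$ resp. $|S\cap V(P_4)|$ and on which vertices of $S$ are $M$-saturated).

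There is, however, one concrete step in your necessity argument that would fail as written: you propose to apply Lemma~\ref{lem6} with $\ell=n-2$ so as to conclude $\lambda_3(G)\le 2$. With that choice the hypotheses of Lemma~\ref{lem6} require $|E_{\overline G}[S,\bar S]\cup \overline G[S]|$ to be at least $3(n-2)-7=3n-13$ (or $3n-14$, $3n-16$, $3n-17$ in the other cases), i.e.\ essentially \emph{all} of the at most $3(n-3)+3$ pairs meeting $S$ would have to be non-edges of $G$. The forbidden configurations you want to exclude (a $P_5$, a $C_4$, two disjoint components each with at least two edges) contribute only a bounded number --- typically $4$ --- of such non-edges, so the threshold is never met once $n\ge 7$ and no contradiction is obtained. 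The correct instantiation is $\ell=4$, giving $\lambda_3(G)\le n-4<n-3$; the corresponding thresholds are $5$, $4$, $2$, $1$, and these \emph{are} met by the natural choices of $S$ (e.g.\ the three middle vertices of a $P_5$ give $\overline G[S]=P_3$ with $|E_{\overline G}[S,\bar S]\cup\overline G[S]|=4\ge 2$). With that repair your argument goes through and is essentially the paper's; the remaining tree constructions for sufficiency, which you defer to a sequence of claims, are indeed routine but occupy most of the published proof.
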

\begin{proof}
\emph{Sufficiency:} Assume that $\lambda_3(G)=n-3$. From Lemma
\ref{lem4}, for a connected graph $H$, $\lambda_3(H)=n-2$ if and
only if $|E(\overline{H})|=1$. Since $\lambda_3(G)=n-3$, it follows
that $|E(\overline{G})|\geq 2$. We claim that
$\delta(\overline{G})\leq 2$. Assume, to the contrary, that
$\delta(\overline{G})\geq 3$. Then $\lambda_3(G)\leq
\delta(G)=n-1-\delta(\overline{G})\leq n-4$, a contradiction. Since
$\delta(\overline{G})\leq 2$, it follows that each component of
$\overline{G}$ is a path or a cycle (note that a isolated vertex in
$\overline{G}$ is a trivial path). We will show that the following
claims hold.

\textbf{ Claim 1.}~~$\overline{G}$ has at most one component of
order larger than $2$.

Suppose, to the contrary, that $\overline{G}$ has two components of
order larger than $2$, denoted by $H_1$ and $H_2$ (see Figure 1
$(a)$).

Let $x,y\in V(H_1)$ and $z\in V(H_2)$ such that
$d_{H_1}(y)=d_{H_2}(z)=2$ and $x$ is adjacent to $y$ in $H_1$. Thus
$d_{G}(y)=n-1-d_{\overline{G}}(y)=n-1-d_{H_1}(y)=n-3$. The same is
true for $z$, that is, $d_{G}(z)=n-3$. Pick $S=\{x,y,z\}$. This
implies that $\delta(G)\leq d_{G}(z)\leq n-3$. Since all other
components of $\overline{G}$ are paths or cycles, $\delta(G)\geq
n-3$. So $\delta(G)=n-3$ and hence
$d_{G}(y)=d_{G}(z)=\delta(G)=n-3$. Since $yz\in E(G)$, by Lemma
\ref{lem7} it follows that $\lambda_3(G)\leq \delta(G)-1=n-4$, a
contradiction.

\textbf{Claim 2.}~~If $H$ is the component of $\overline{G}$ of
order larger than $3$, then $H$ is a $4$-path.

Assume, to the contrary, that $H$ is a path or a cycle of order
larger than $4$, or a cycle of order $4$.

First, we consider the former. We can pick a $P_5$ in $H$. Let
$P_5=v_1,v_2,v_3,v_4,v_5$, $S=\{v_2,v_3,v_4\}$ and
$\bar{S}=G\setminus \{v_2,v_3,v_4\}$ (see Figure 1 $(b)$). Since
$v_2v_3,v_3v_4\notin E(G[S])$, there exists no tree of type $I$
connecting $S$. From Lemma \ref{lem5}, each tree of type $II$ uses
at least $3$ edges. Since $|E(G[S])\cup E_G[S,\bar{S}]|=3(n-3)-1$,
we have $|\mathscr{T}_2|\leq \frac{3(n-3)-1}{3}$ and hence
$|\mathscr{T}|=|\mathscr{T}_2|=n-4$ since $\lambda_3(G)$ is an
integer. This contradicts to $\lambda_3(G)=n-3$.

\begin{figure}[h,t,b,p]
\begin{center}
\scalebox{0.8}[0.8]{\includegraphics{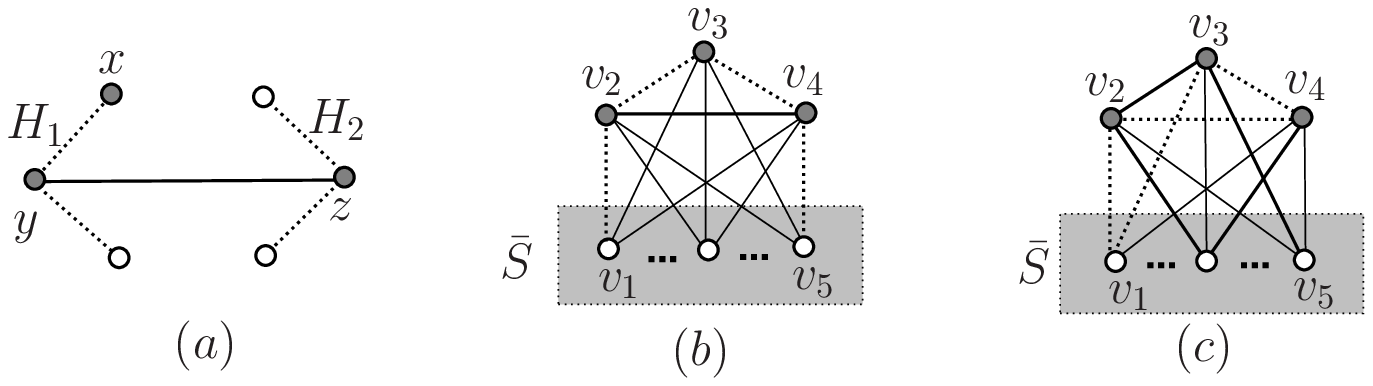}}\\
Figure 1: Graphs for Claims $1$ and $2$.
\end{center}
\end{figure}

Now we consider the latter. Let $H=v_1,v_2,v_3,v_4$ be a cycle, and
$S=\{v_2,v_3,v_4\}$ (see Figure 1 $(c)$). Since $v_2v_3,v_3v_4\notin
E[S]$, there exists no tree of type $I$. Since each tree of type
$II$ uses at least $3$ edges and $|E(G[S])\cup
E_G[S,\bar{S}]|=3(n-3)-1$, we have $|\mathscr{T}_2|\leq
\frac{3(n-3)-1}{3}$ and $|\mathscr{T}|=|\mathscr{T}_2|=n-4$, which
also contradicts to $\lambda_3(G)=n-3$.

From the above two claims, we know that if $\overline{G}$ has a
component $P_4$, then it is the only component of order larger than
$3$ and the other components must be independent edges. Let $s$ be
the number of such independent edges. $\overline{G}$ can have as
many as such independent edges, which implies that $s\leq
\lfloor\frac{n-4}{2}\rfloor$. From Lemma \ref{lem4}, $s\geq 0$. Thus
$0\leq s\leq \lfloor\frac{n-4}{2}\rfloor$.

By the similar analysis, we conclude that $\overline{G}=r P_2\cup
(n-2r)K_1 \ (2\leq r\leq \lfloor\frac{n}{2}\rfloor)$ or
$\overline{G}=P_4\cup s P_2\cup (n-2s-4)K_1 \ (0\leq s\leq
\lfloor\frac{n-4}{2}\rfloor)$ or $\overline{G}=P_3 \cup t P_2\cup
(n-2t-3)K_1 \ (0\leq t\leq \lfloor\frac{n-3}{2}\rfloor)$ or
$\overline{G}=C_3 \cup t P_2\cup (n-2t-3)K_1 \ (0\leq t\leq
\lfloor\frac{n-3}{2}\rfloor)$.

\emph{Necessity:} We will show that $\lambda_3(G)\geq n-3$ if $G$ is
a graph with the conditions of this theorem. We have the following
cases to consider.

\textbf{Case 1.}~~$\overline{G}=P_3 \cup t P_2\cup (n-2t-3)K_1$ or
$\overline{G}=C_3 \cup t P_2\cup (n-2t-3)K_1 \ (0\leq t\leq
\lfloor\frac{n-3}{2}\rfloor)$.

We only need to show that $\lambda_3(G)\geq n-3$ for
$t=\lfloor\frac{n-3}{2}\rfloor$. If $\lambda_3(G)\geq n-3$ for
$\overline{G}=C_3 \cup t P_2\cup (n-2t-3)K_1$, then
$\lambda_3(G)\geq n-3$ for $\overline{G}=P_3 \cup t P_2\cup
(n-2t-3)K_1$. It suffices to check that $\lambda_3(G)\geq n-3$ for
$\overline{G}=C_3 \cup \lfloor\frac{n-3}{2}\rfloor P_2\cup
(n-2\lfloor\frac{n-3}{2}\rfloor-3)K_1$.

Let $C_3=v_1,v_2,v_3$ and $S=\{x,y,z\}$ be a $3$-subset of $G$, and
$M=\lfloor\frac{n-3}{2}\rfloor P_2$. It is clear that $M$ is a
maximum matching of $\overline{G}\setminus V(C_3)$. Then
$\overline{G}\setminus V(C_3)$ has at most one $M$-unsaturated
vertex.

\begin{figure}[h,t,b,p]
\begin{center}
\scalebox{0.8}[0.8]{\includegraphics{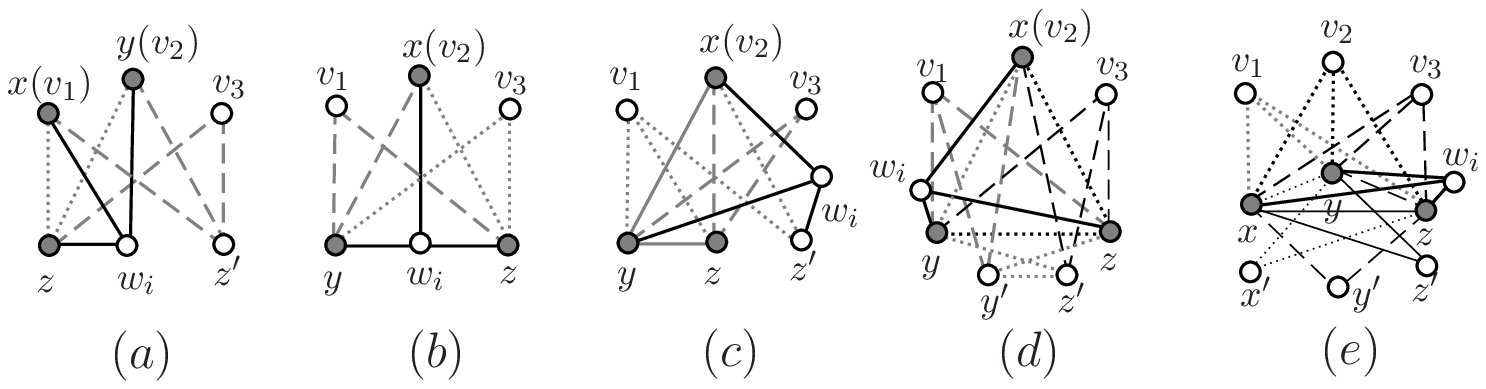}}\\
Figure 2: Graphs for Case $1$.
\end{center}
\end{figure}

If $S=V(C_3)$, then there exist $n-3$ pairwise edge-disjoint trees
connecting $S$ since each vertex in $S$ is adjacent to every vertex
in $G\setminus S$. Suppose $S\neq V(C_3)$.

If $|S\cap V(C_3)|=2$, then one element of $S$ belongs to $\in
V(G)\setminus V(C_3)$, denoted by $z$. Since
$d_G(v_1)=d_G(v_2)=d_G(v_3)=n-3$, we can assume that $x=v_1$,
$y=v_2$. When $z$ is $M$-unsaturated, the trees $T_i=w_ix\cup
w_iy\cup w_iz$ together with $T_1=xz\cup yz$ form $n-3$ pairwise
edge-disjoint trees connecting $S$, where
$\{w_1,w_2,\cdots,w_{n-4}\}=V(G)\setminus \{x,y,z,v_3\}$. When $z$
is $M_1$-saturated, we let $z'$ be the adjacent vertex of $z$ under
$M_1$. Then the trees $T_i=w_ix\cup w_iy\cup w_iz$ together with
$T_1=xz\cup yz$ and $T_2=xz'\cup yz'\cup z'v_3\cup zv_3$ form $n-3$
pairwise edge-disjoint trees connecting $S$ (see Figure 2 $(a)$),
 where $\{w_1,w_2,\cdots,w_{n-5}\}=V(G)\setminus \{x,y,z,z',v_3\}$.

If $|S\cap V(C_3)|=1$, then two elements of $S$ belong to $\in
V(G)\setminus V(C_3)$, denoted by $y$ and $z$. Without loss of
generality, let $x=v_2$. When $y$ and $z$ are adjacent under $M_1$,
the trees $T_i=w_ix\cup w_iy\cup w_iz$ together with $T_1=xy\cup
yv_1\cup v_1z$ and $T_2=xz\cup zv_3\cup v_3y$ form $n-3$ pairwise
edge-disjoint trees connecting $S$ (see Figure 2 $(b)$), where
$\{w_1,w_2,\cdots,w_{n-5}\}=V(G)\setminus \{x,y,z,v_1,v_3\}$. When
$y$ and $z$ are nonadjacent under $M$, we consider whether $y$ and
$z$ are $M$-saturated. If one of $\{y,z\}$ is $M$-unsaturated,
without loss of generality, we assume that $y$ is $M$-unsaturated.
Since $G\setminus V(C_3)$ has at most one $M$-unsaturated vertex,
$z$ is $M$-saturated. Let $z'$ be the adjacent vertex of $z$ under
$M$. Then the trees $T_i=w_ix\cup w_iy\cup w_iz$ together with
$T_1=xy\cup yz$ and $T_2=v_1y\cup v_1z\cup z'v_1\cup z'x$ and
$T_3=xz\cup zv_3\cup v_3y$ form $n-3$ pairwise edge-disjoint trees
connecting $S$ (see Figure 2 $(c)$), where
$\{w_1,w_2,\cdots,w_{n-6}\}=V(G)\setminus \{x,y,z,z',v_1,v_3\}$. If
both $y$ and $z$ are $M$-saturated, we let $y',z'$ be the adjacent
vertex of $y,z$ under $M$, respectively. Then the trees
$T_i=w_ix\cup w_iy\cup w_iz$ together with $T_1=xz\cup yz$,
$T_2=xy\cup yz'\cup z'y'\cup y'z$, $T_3=yv_3\cup z'v_3\cup zv_3\cup
xz'$ and $T_4=yv_1\cup y'v_1\cup zv_1\cup y'x$ form $n-3$ pairwise
edge-disjoint trees connecting $S$ (see Figure 2 $(d)$), where
$\{w_1,w_2,\cdots,w_{n-7}\}=V(G)\setminus \{x,y,z,y',z',v_1,v_3\}$.

Otherwise, $S\subseteq G\setminus V(C_3)$. When one of $\{x,y,z\}$
is $M$-unsaturated, without loss of generality, we assume that $x$
is $M$-unsaturated. Since $G\setminus V(C_3)$ has at most one
$M$-unsaturated vertex, both $y$ and $z$ are $M$-saturated. Let
$y',z'$ be the adjacent vertex of $y,z$ under $M$, respectively. We
pick a vertex $x'$ of $V(G)\setminus \{x,y,y',z,z',v_1,v_2,v_3\}$.
When $x,y,z$ are all $M$-saturated, we let $x',y',z'$ be the
adjacent vertex of $x,y,z$ under $M$, respectively. Then the trees
$T_i=w_ix\cup w_iy\cup w_iz$ together with $T_j=xv_j\cup yv_j\cup
zv_j(1\leq j\leq 3)$ and $T_4=xy\cup yx'\cup x'z$ and $T_5=xy'\cup
zy'\cup zy$ and $T_6=zx\cup xz'\cup z'y$ form $n-3$ pairwise
edge-disjoint trees connecting $S$ (see Figure 2 $(e)$), where
$\{w_1,w_2,\cdots,w_{n-9}\}=V(G)\setminus
\{x,y,z,x',y',z',v_1,v_2,v_3\}$.

From the above discussion, we get that $\lambda(S)\geq n-3$ for
$S\subseteq V(G)$, which implies $\lambda_3(G)\geq n-3$. So
$\lambda_3(G)=n-3$.

\textbf{\itshape Case 2.}~~$\overline{G}=r P_2\cup (n-2r)K_1 \
(2\leq r\leq \lfloor\frac{n}{2}\rfloor)$ or $\overline{G}=P_4\cup s
P_2\cup (n-2s-4)K_1 \ (0\leq s\leq \lfloor\frac{n-4}{2}\rfloor)$.

We only need to show that $\lambda_3(G)\geq n-3$ for
$r=\lfloor\frac{n}{2}\rfloor$ and $s=\lfloor\frac{n-4}{2}\rfloor$.
If $\lambda_3(G)\geq n-3$ for $\overline{G}=P_4\cup
\lfloor\frac{n-4}{2}\rfloor P_2\cup
(n-2\lfloor\frac{n-4}{2}\rfloor-4) K_1$, then $\lambda_3(G)\geq n-3$
for $\overline{G}=\lfloor\frac{n}{2}\rfloor P_2\cup
(n-2\lfloor\frac{n}{2}\rfloor)K_1$. So we only need to consider the
former. Let $P_4=v_1,v_2,v_3,v_4$, $S=\{x,y,z\}$ be a $3$-subset of
$G$, and $M=\overline{G}\setminus E(P_4)$. Clearly, $M$ is a maximum
matching of $\overline{G}\setminus V(P_4)$. It is easy to see that
$\overline{G}\setminus V(P_4)$ has at most one $M$-unsaturated
vertex. For any $S\subseteq V(G)$, we will show that there exist
$n-3$ edge-disjoint trees connecting $S$ in $G$.

If $S\subseteq V(P_4)$, then there exist $n-4$ pairwise
edge-disjoint trees connecting $S$ since each vertex in $S$ is
adjacent to every vertex in $G\setminus V(P_4)$. Since
$d_G(v_1)=d_G(v_4)=n-2$ and $d_G(v_2)=d_G(v_3)=n-3$, we only need to
consider $S=\{v_1,v_2,v_3\}$ and $S=\{v_1,v_2,v_4\}$. These trees
together with $T=yv_4\cup v_4x\cup v_4z$ for $S=\{v_1,v_2,v_3\}$, or
$T=xy\cup yz$ for $S=\{v_1,v_2,v_3\}$ form $n-3$ pairwise
edge-disjoint trees connecting $S$. Suppose $S\cap V(P_4)\neq 3$.

\begin{figure}[h,t,b,p]
\begin{center}
\scalebox{0.8}[0.8]{\includegraphics{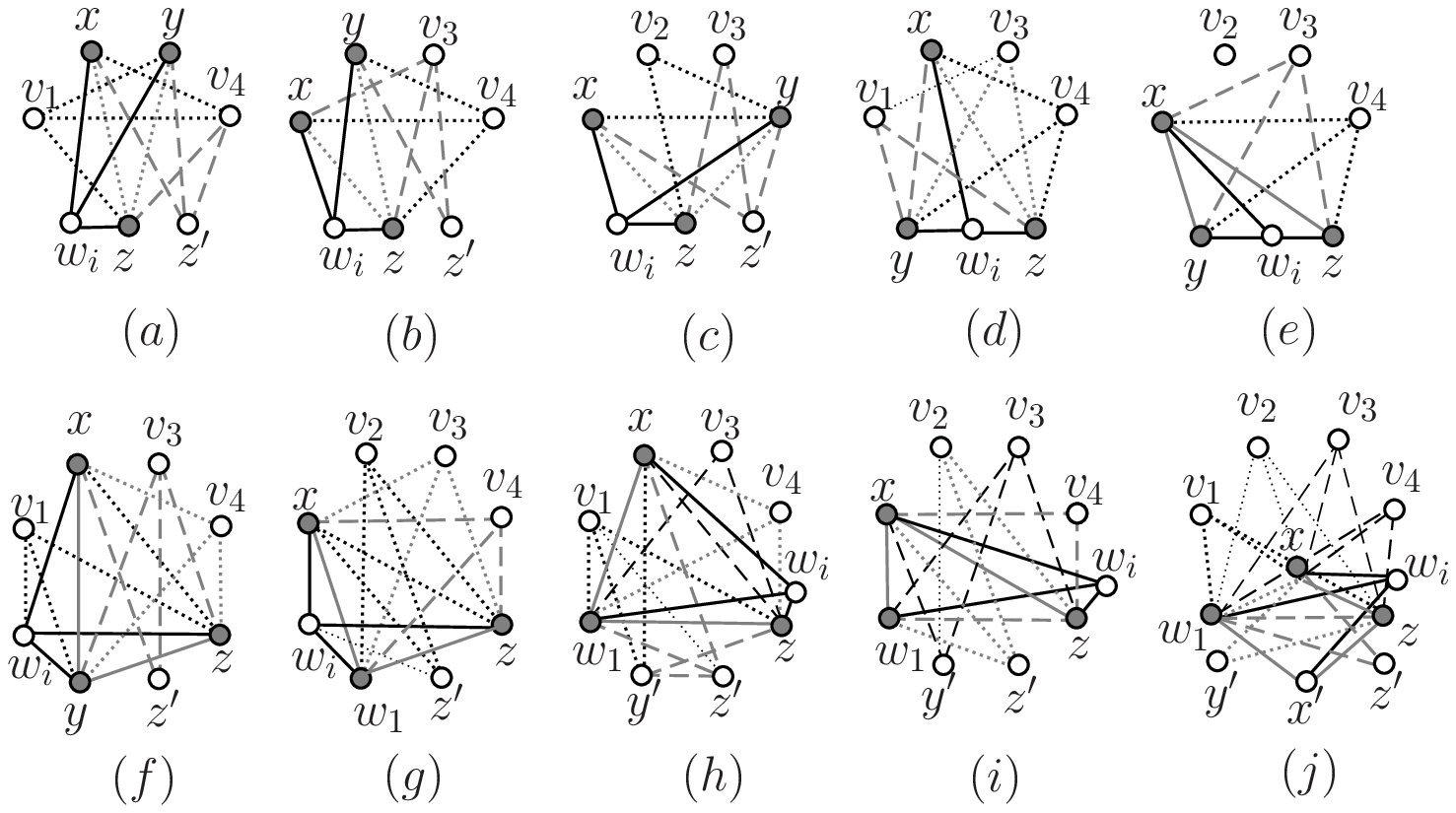}}\\
Figure 3: Graphs for $S$ in Case 2.
\end{center}
\end{figure}

If $|S\cap V(P_4)|=2$, then one element of $S$ belongs to $\in
V(G)\setminus V(P_4)$, denoted by $z$. Since $d_G(v_1)=d_G(v_4)=n-2$
and $d_G(v_2)=d_G(v_3)=n-3$, we only need to consider $x=v_1,y=v_2$
or $x=v_2,y=v_3$ or $x=v_1,y=v_4$. When $z$ is $M$-unsaturated, the
trees $T_i=w_ix\cup w_iy\cup w_iz$ together with $T_1=xz\cup yz$,
$T_2=xv_4\cup yv_4\cup zv_4$ for $x=v_1,y=v_2$, or $T_2=xv_4\cup
v_4v_1\cup v_1y\cup v_4z$ for $x=v_2,y=v_3$, or $T_2=xv_3\cup
yv_3\cup zv_3$ for $x=v_1,y=v_4$ form $n-3$ pairwise edge-disjoint
trees connecting $S$, where
$\{w_1,w_2,\cdots,w_{n-5}\}=V(G)\setminus (V(P_4)\cup\{z\})$. When
$z$ is $M$-unsaturated, we let $z'$ be the adjacent vertex of $z$
under $M$. For $x=v_2,y=v_3$, the trees $T_i=w_ix\cup w_iy\cup w_iz$
together with $T_1=xz\cup yz$, $T_2=xz'\cup yz'\cup z'v_4\cup zv_4$
and $T_2=yv_1\cup v_1v_4\cup zv_1\cup xv_4$ form $n-3$ pairwise
edge-disjoint trees connecting $S$ (see Figure 3 $(a)$), where
$\{w_1,w_2,\cdots,w_{n-6}\}=V(G)\setminus \{x,y,z,z',v_1,v_4\}$. One
can check that the same is true for $x=v_1,y=v_2$ and $x=v_1,y=v_4$
(see Figure 3 $(b)$ and $(c)$).

If $|S\cap V(P_4)|=1$, then two elements of $S$ belong to $\in
V(G)\setminus V(P_4)$, denoted by $y$ and $z$. We only need to
consider $x=v_1$ or $x=v_2$. When $y$ and $z$ are adjacent under
$M_1$, the trees $T_i=w_ix\cup w_iy\cup w_iz$ together with
$T_1=xy\cup zv_1\cup yv_1$, $T_2=xz\cup zv_3\cup yv_3$ and
$T_3=xv_4\cup yv_4\cup zv_4$ form $n-3$ pairwise edge-disjoint trees
connecting $S$ for $x=v_2$ (see Figure 3 $(d)$), where
$\{w_1,w_2,\cdots,w_{n-6}\}=V(G)\setminus \{x,y,z,v_1,v_3,v_4\}$.
The same is true for $x=v_1$ (see Figure 3 $(e)$). When $y$ and $z$
are nonadjacent under $M$, we consider whether $y$ and $z$ are
$M$-saturated. If one of $\{y,z\}$ is $M$-unsaturated, without loss
of generality, we assume that $y$ is $M$-unsaturated. Since
$G\setminus V(P_4)$ has at most one $M$-unsaturated vertex, $z$ is
$M$-saturated. Let $z'$ be the adjacent vertex of $z$ under $M$. For
$x=v_2$, the trees $T_i=w_ix\cup w_iy\cup w_iz$ together with
$T_1=xz\cup yz$, $T_2=v_4x\cup v_4y\cup v_4z$, $T_3=v_1y\cup
v_1z\cup zx$ and $T_4=z'x\cup v_3y\cup z'v_3\cup zv_3$ form $n-3$
pairwise edge-disjoint trees connecting $S$ (see Figure 3 $(f)$),
where $\{w_1,w_2,\cdots,w_{n-7}\}=V(G)\setminus
\{x,y,z,z',v_1,v_3,v_4\}$. The same is true for $x=v_1$ (see Figure
3 $(g)$). If both $y$ and $z$ are $M$-saturated, we let $y',z'$ be
the adjacent vertex of $y,z$ under $M$, respectively. For $x=v_2$,
the trees $T_i=w_ix\cup w_iy\cup w_iz$ together with $T_1=xz\cup
yz$, $T_2=yv_3\cup zv_3\cup zx$, $T_3=xv_4\cup yv_4\cup zv_4$,
$T_4=yv_1\cup y'v_1\cup zv_1\cup xy'$ and $T_5=xz'\cup z'y\cup
z'y'\cup y'z$ form $n-3$ pairwise edge-disjoint trees connecting $S$
(see Figure 3 $(h)$), where
$\{w_1,w_2,\cdots,w_{n-8}\}=V(G)\setminus
\{x,y,z,y',z',v_1,v_3,v_4\}$. The same is true for $x=v_1$ (see
Figure 3 $(i)$).

If $S\subseteq G\setminus V(P_4)$, when one of $\{x,y,z\}$ is
$M$-unsaturated, without loss of generality, we let $x$ is
$M$-unsaturated, then both $y$ and $z$ are $M$-saturated. Let
$y',z'$ be the adjacent vertex of $y,z$ under $M$, respectively. We
pick a vertex $x'$ of $V(G)\setminus \{x,y,y',z,z',v_1,v_2,v_3\}$.
When $x,y,z$ are all $M$-saturated, we let $x',y',z'$ be the
adjacent vertex of $x,y,z$ under $M$, respectively. Then the trees
$T_i=w_ix\cup w_iy\cup w_iz$ together with $T_j=xv_j\cup yv_j\cup
zv_j(1\leq j\leq 4)$ and $T_5=yx\cup xy'\cup y'z$ and $T_6=yx'\cup
zx'\cup zx$ and $T_7=zy\cup yz'\cup z'x$ form $n-3$ pairwise
edge-disjoint trees connecting $S$ (see Figure 3 $(j)$), where
$\{w_1,w_2,\cdots,w_{n-10}\}=V(G)\setminus
\{x,y,z,x',y',z',v_1,v_2,v_3,v_4\}$.

From the above argument, we conclude that for any $S\subseteq V(G)$
$\lambda(S)\geq n-3$. From the arbitrariness of $S$, we have
$\lambda_3(G)\geq n-3$. The proof is now complete.
\end{proof}

\section{The minimal size of a graph with $\lambda_3=\ell$}

Recall that $g(n,k,\ell)$ is the minimal number of edges of a graph
$G$ of order $n$ with $\lambda_k(G)=\ell \ (1\leq \ell\leq
n-\lceil\frac{k}{2}\rceil)$. Let us focus on the case $k=3$ and
derive the following result.

\begin{theorem}\label{th2}
Let $n$ be an integer with $n\geq 3$. Then

$(1)$ $g(n,3,n-2)={n\choose{2}}-1$;

$(2)$ $g(n,3,n-3)={n\choose{2}}-\lfloor\frac{n+3}{2}\rfloor$;

$(3)$ $g(n,3,1)=n-1$;

$(4)$ $g(n,3,\ell)\geq \big\lceil \frac{\ell(\ell+1)}{2\ell+1}n
\big\rceil$ for $n\geq 11$ and $1\leq \ell\leq n-4$. Moreover, the
bound is sharp.
\end{theorem}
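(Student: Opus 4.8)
The plan is to treat the four statements in turn; parts $(1)$--$(3)$ follow quickly from results already in hand, so the real content is the lower bound and its sharpness in part $(4)$.

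For $(1)$: Lemma \ref{lem4} with $k=3$ says that a connected graph $G$ of order $n$ satisfies $\lambda_3(G)=n-2$ if and only if $G=K_n\setminus M$ with $0\le|M|\le 1$; the only such graphs are $K_n$ and $K_n$ minus one edge, so the minimum size is $\binom{n}{2}-1$. For $(2)$: by the characterisation of connected graphs of order $n$ with $\lambda_3(G)=n-3$ obtained above, we must maximise $|E(\overline{G})|$ over the four admissible families for $\overline{G}$; comparing the quantities $r$, $3+s$, $2+t$, $3+t$ against the stated bounds on $r,s,t$ shows the maximum is $\lfloor(n+3)/2\rfloor$, attained by taking $\overline{G}$ to be a triangle together with a maximum matching on the other $n-3$ vertices, so $g(n,3,n-3)=\binom{n}{2}-\lfloor(n+3)/2\rfloor$. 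For $(3)$: every connected graph of order $n$ has at least $n-1$ edges, and any tree $T$ of order $n$ has $\lambda_3(T)\le\lambda(T)=1$ by Lemma \ref{lem2} and $\lambda_3(T)\ge 1$ by Lemma \ref{lem3}, hence $\lambda_3(T)=1$ and $g(n,3,1)=n-1$.

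For the lower bound in $(4)$, let $G$ be a connected graph of order $n$ with $\lambda_3(G)=\ell$. By Observation \ref{obs1}, $\delta(G)\ge\lambda_3(G)=\ell$. Let $W$ be the set of vertices of degree exactly $\ell$ and put $w=|W|$; if $w\ge 1$ then $\delta(G)=\ell$, and Lemma \ref{lem7} rules out two adjacent vertices of degree $\ell$, so $W$ is an independent set (vacuously so if $w=0$). Two counts follow: every vertex outside $W$ has degree at least $\ell+1$, so $2|E(G)|=\sum_{v}d_G(v)\ge\ell w+(\ell+1)(n-w)=(\ell+1)n-w$; and since $W$ is independent, every edge meeting $W$ goes to its complement, so $|E(G)|\ge\sum_{v\in W}d_G(v)=\ell w$. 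Therefore $|E(G)|\ge\max\{\tfrac{(\ell+1)n-w}{2},\,\ell w\}$; the first term is non-increasing in $w$ and the second is increasing, and they agree precisely when $w=\tfrac{(\ell+1)n}{2\ell+1}$, at which point both equal $\tfrac{\ell(\ell+1)n}{2\ell+1}$. Hence $|E(G)|\ge\tfrac{\ell(\ell+1)n}{2\ell+1}$, and since $|E(G)|$ is an integer, $g(n,3,\ell)\ge\lceil\tfrac{\ell(\ell+1)n}{2\ell+1}\rceil$.

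For sharpness, note that equality in the above forces $G$ to be a bipartite graph with parts $A$ and $B$ in which every vertex of $A$ has degree $\ell$, every vertex of $B$ has degree $\ell+1$, $|A|=(\ell+1)t$, $|B|=\ell t$ and $n=(2\ell+1)t$; so we look for such a connected bipartite graph with $\lambda_3(G)=\ell$. For $t=1$ one takes $G=K_{\ell+1,\ell}$, which has exactly $\ell(\ell+1)=\tfrac{\ell(\ell+1)n}{2\ell+1}$ edges and, since $\delta(G)=\ell$ gives $\lambda_3(G)\le\ell$ while a short tree-packing argument (the $\ell$ stars centred at the common neighbours when $S$ lies in the degree-$\ell$ side, and easy reroutings otherwise) gives $\lambda_3(G)\ge\ell$, already establishes sharpness for all $\ell\ge 5$ (so that $n=2\ell+1\ge 11$ and $\ell\le n-4$). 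For larger $t$ (and for $2\le\ell\le4$) one uses a \emph{circulant} bipartite graph: cyclically order $B$, let $A$ index the $(\ell+1)t$ windows of $\ell$ consecutive vertices consisting of one window starting at each position of $B$ together with the $t$ windows that tile $B$, and join each $a\in A$ to the $\ell$ vertices of its window; again $\delta(G)=\ell$ and $|E(G)|=\ell(\ell+1)t=\tfrac{\ell(\ell+1)n}{2\ell+1}$. The main obstacle, and essentially the only laborious step, is verifying $\lambda_3(G)\ge\ell$ for this construction: for every $3$-set $S$ one must exhibit $\ell$ pairwise edge-disjoint $S$-trees, by a case analysis on how $S$ meets $A$ and $B$ and by routing trees through the heavily overlapping neighbourhoods, in the same spirit as the tree constructions in the proof of the $\lambda_3(G)=n-3$ characterisation.
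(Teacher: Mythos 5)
Your argument is correct and follows essentially the same route as the paper: parts $(1)$--$(3)$ from Lemma \ref{lem4}, the $\lambda_3=n-3$ characterization and trees, and for $(4)$ the same edge count exploiting that the set of degree-$\ell$ vertices is independent (your max--min over $w$ is just a repackaging of the paper's linear combination of the two inequalities $e\geq \ell|X|$ and $2e\geq (\ell+1)n-|X|$), with the same extremal example $K_{\ell,\ell+1}$. The extra circulant construction you sketch for other values of $n$ goes beyond what the paper proves (the paper also only verifies sharpness via $K_{\ell,\ell+1}$, i.e.\ $n=2\ell+1$), so leaving its $\lambda_3\geq\ell$ verification open is not a gap in the statement as actually established.
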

\begin{proof}
$(1)$ From Lemma \ref{lem4}, $\lambda_3(G)=n-2$ if and only if
$G=K_n$ or $G=K_n\setminus e$ where $e\in E(K_n)$. So
$g(n,3,n-2)={n\choose{2}}-1$.

$(2)$ From Theorem \ref{th2}, $\lambda_3(G)=n-3$ if and only if
$\overline{G}=r P_2\cup (n-2r)K_1 \ (2\leq r\leq
\lfloor\frac{n}{2}\rfloor)$ or $\overline{G}=P_4\cup sP_2\cup
(n-2s-4)K_1 \ (0\leq s\leq \lfloor\frac{n-4}{2}\rfloor)$ or
$\overline{G}=P_3\cup t P_2\cup (n-2t-3)K_1 \ (0\leq t\leq
\lfloor\frac{n-3}{2}\rfloor)$ or $\overline{G}=C_3\cup t P_2\cup
(n-2t-3)K_1 \ (0\leq t\leq \lfloor\frac{n-3}{2}\rfloor)$. If $n$ is
even, then $max\{e(\overline{G})\}=\frac{n+2}{2}$, which implies
that
$g(n,3,n-3)={n\choose{2}}-max\{e(\overline{G})\}={n\choose{2}}-\frac{n+2}{2}$.
If $n$ is odd, then $max\{e(\overline{G})\}=\frac{n+3}{2}$, which
implies that
$g(n,3,n-3)={n\choose{2}}-max\{e(\overline{G})\}={n\choose{2}}-\frac{n+3}{2}$.
So $g(n,3,n-3)={n\choose{2}}-\lfloor\frac{n+3}{2}\rfloor$.

$(3)$ It is clear that the tree $T_n$ is the graph such that
$\lambda_3(G)=1$ with the minimal number of edges. So
$g(n,3,1)=n-1$.

$(4)$ Since $\lambda_k=\ell$, by Lemma \ref{lem5}, we know that
$\delta(G)\geq \ell$ and any two vertices of degree $\ell$ are not
adjacent. Denote by $X$ the set of vertices of degree $\ell$. We
have that $X$ is a independent set. Put $Y=V(G)\setminus X$ and
obviously there are $2|X|$ edges joining $X$ to $Y$. Assume that
$m'$ is the number of edges joining two vertices belonging to $Y$.
It is clear that

$$
e=\ell|X|+m' \eqno (1)
$$

Since every vertex of $Y$ has degree at least $\ell+1$ in $G$, then
$\sum_{v\in Y}d(v)=\ell|X|+2m'\geq (\ell+1)|Y|=(\ell+1)(n-|X|)$,
namely,

$$
(2\ell+1)|X|+ 2m'\geq (\ell+1)n \eqno (2)
$$

Combining $(1)$ with $(2)$, we have
$\frac{2\ell+1}{\ell}e(G)=(2\ell+1)|X|+\frac{2\ell+1}{\ell}m'\geq
(2\ell+1)|X|+2m'\geq (\ell+1)n$ Therefore, $e(G)\geq
\frac{\ell(\ell+1)}{2\ell+1}n$. Since the number of edges is an
integer, it follows that $e(G)\geq \lceil
\frac{\ell(\ell+1)}{2\ell+1}n \rceil$.

To show that the upper bound is sharp, we consider the complete
bipartite graph $G=K_{\ell,\ell+1}$. Let
$U=\{u_1,u_2,\cdots,u_{\ell}\}$ and
$W=\{w_1,w_2,\cdots,w_{\ell+1}\}$ be the two parts of
$K_{\ell,\ell+1}$. Choose $S\subseteq V(G)$. We will show that there
are $\ell$ edge-disjoint trees connecting $S$.

If $|S\cap U|=3$, without loss of generality, let
$S=\{u_1,u_2,u_{3}\}$, then the trees $T_i=u_1v_i\cup u_2v_i\cup
u_3v_i \ (1\leq i\leq \ell+1)$ are $\ell+1$ edge-disjoint trees
connecting $S$.

If $|S\cap U|=2$, then $|S\cap W|=1$. Without loss of generality,
let $S=\{u_1,u_2,v_{1}\}$. Then the trees $T_i=u_1v_i\cup u_iv_i\cup
u_iv_1 \ (4\leq i\leq \ell+1)$ and $T_1=u_1v_1\cup u_1v_3\cup
u_2u_3$ and $T_2=u_2v_1\cup u_2v_2\cup u_1v_2$ are $\ell$
edge-disjoint trees connecting $S$.

If $|S\cap U|=1$, then $|S\cap W|=2$. Without loss of generality,
let $S=\{u_1,v_1,v_{2}\}$. Then the trees $T_i=u_1v_{i+1}\cup
u_iv_{i+1}\cup u_iv_1\cup u_iv_2 \ (2\leq i\leq \ell)$ and
$T_1=u_1v_1\cup u_1v_2$ are $\ell$ edge-disjoint trees connecting
$S$.

Suppose $|S\cap W|=3$. Without loss of generality, let
$S=\{w_1,w_2,w_{3}\}$, then the trees $T_i=w_1u_i\cup w_2u_i\cup
w_3u_i \ (1\leq i\leq \ell)$ are $\ell$ edge-disjoint trees
connecting $S$.

From the above argument, we conclude that, for any $S\subseteq
V(G)$, $\lambda(S)\geq \ell$. So $\lambda_3(G)\geq \ell$. On the
other hand, $\lambda_3(G)\leq \delta(G)=\ell$ and hence
$\lambda_3(G)=\ell$. Clearly, $|V(G)|=2\ell+1$,
$e(G)=\ell(\ell+1)=\lceil \frac{\ell(\ell+1)}{2\ell+1}n \rceil$.

So the lower bound is sharp for $k=3$ and $2\leq \ell\leq
n-2-\lceil\frac{k}{2}\rceil$.

\end{proof}

\end{document}